\setlist[enumerate]{topsep=5pt,itemsep=-0.5ex,leftmargin=8mm}
\setlist[itemize]{leftmargin=*}
\newtheorem{theorem}{Theorem}[section]
\newtheorem{lemma}[theorem]{Lemma}
\newtheorem{proposition}[theorem]{Proposition}
\newtheorem{corollary}[theorem]{Corollary}
\theoremstyle{definition}
\newtheorem{definition}[theorem]{Definition}
\newtheorem{example}[theorem]{Example}
\theoremstyle{remark}
\newtheorem{remark}[theorem]{Remark}
\numberwithin{equation}{section}
\newcounter{syscounter}
\newenvironment{sysnum}{\begin{list}{($\Sigma{\arabic{syscounter}}$)}%
{\settowidth{\labelwidth}{($\Sigma4$)}
\settowidth{\leftmargin}{($\Sigma4$)~}%
\usecounter{syscounter}}}
{\end{list}}
\newcommand \N   {\mathbb{N}}
\newcommand \R   {\mathbb{R}}
\newcommand \A   {\mathcal{A}}
\newcommand \K   {\mathcal{K}}
\newcommand \Kinf{\mathcal{K_\infty}}
\newcommand \KL  {\mathcal{KL}}
\newcommand \LL  {\mathcal{L}}
\newcommand \Uc   {\mathcal{U}}
\newcommand \Dc   {\mathcal{D}}
\newcommand{\Rc}{\ensuremath{\mathcal{R}}}
\newcommand \srs   {\ \ \Rightarrow\ \ }
\newcommand \Iff   {\Leftrightarrow}
\newcommand \eps {\varepsilon}
\newtheorem*{Sol*}{Solution} 
\newif\ifAndo
\journal{Systems \& Control Letters}
\begin{document}

\begin{frontmatter}

\title
{Uniform weak attractivity and criteria for practical global asymptotic stability}

\author[Pas]{Andrii Mironchenko}
\ead{andrii.mironchenko@uni-passau.de}

\address[Pas]{Faculty of Computer Science and Mathematics, University of Passau,
Innstra\ss e 33, 94032 Passau, Germany }


\begin{abstract}
A subset $\A$ of the state space is called uniformly globally weakly attractive if for any neighborhood $S$ of $\A$ and any bounded subset $B$ there is a uniform finite time $\tau$ so that any trajectory starting in $B$ intersects $S$ within the time not larger than $\tau$.
We show that practical uniform global asymptotic stability (pUGAS) is equivalent to the existence of a bounded uniformly globally weakly attractive set.
This result is valid for a wide class of distributed parameter systems, including time-delay systems, switched systems, many classes of PDEs and evolution differential equations in Banach spaces. We apply our results to show that existence of a non-coercive Lyapunov function ensures pUGAS for this class of systems. For ordinary differential equations with uniformly bounded disturbances, the concept of uniform weak attractivity is equivalent to the well-known notion of weak attractivity. It is however essentially stronger than weak attractivity for infinite-dimensional systems, even for linear ones.
\end{abstract}

\begin{keyword}
nonlinear control systems, infinite-dimensional systems, practical stability, non-coercive Lyapunov functions
\end{keyword}

\end{frontmatter}

\section{Introduction}
\label{sec:introduction}
%
%
%

A basic problem of control theory is to achieve a global stabilization of a nonlinear system. However, in many cases it is impossible (as in quantized control) or too costly to construct a feedback which lets all states of a system uniformly converge to the unique equilibrium point of a closed loop system. 
In such cases, it is natural to let the trajectories converge to a certain ball of a positive radius.

A basic concept which formalizes such a behavior is practical stability \cite{LLM90}.
This concept was extremely useful for stabilization of stochastic control systems \cite{ZhX13}, for control under quantization errors \cite{ShL12} and under communication bandwidth constraints \cite{WoB99}, etc. In \cite{JTP94,JMW96} conditions of small-gain type have been developed for stability of interconnections of input-to-state practically stable nonlinear systems.
In view of the importance of practical stability, it is essential to understand a relationship of practical stability to other stability notions. In this paper \textit{we prove a general criterion for practical uniform global asymptotic stability (pUGAS) for a wide class of control systems with disturbances} including evolution equations in Banach spaces, many classes of partial differential equations, switched systems and ordinary differential equations (ODEs). A key concept helping to achieve this aim is a \textit{uniform weak attractivity} \cite{MiW17a,MiW17b}.

A subset $\A$ of the state space $X$ is called \textit{globally weakly attractive} if for any neighborhood $S$ of $\A$ and any $x\in X$
the trajectory of a system, corresponding to initial condition $x$ intersects $S$.
The concept of weak attractivity has been introduced in \cite{Bha66} and played a prominent role in stability theory. In particular, this notion was one of the cornerstones for the development of stability theory in locally compact metric spaces in \cite{BhS02} and it was central for the proof of characterizations of input-to-state stability (ISS) of ODE systems in \cite{SoW95, SoW96}. 

In \cite{KaK15} conditions in the spirit of LaSalle's theorem which
guarantee that a set is a weak attractor are provided for finite-dimensional systems, without assuming forward completeness and boundedness of the set.


As useful the concept of weak attractivity is for ODEs and control systems over locally compact metric spaces, it is far too weak for general infinite-dimensional systems to imply any kind of uniform stability.
There are fundamental reasons for that: closed bounded balls are never compact in the norm topology
of infinite-dimensional normed linear spaces, nonuniformly globally
asymptotically stable nonlinear systems do not need to have bounded
finite-time reachability sets and even if they do, this still does not
guarantee uniform global stability \cite{MiW17b} (all such unpleasant things cannot occur in ODE setting).

In order to overcome these obstacles, in \cite{MiW17a,MiW17b} a notion of uniform weak attractivity has been introduced.
A subset $\A$ of the state space is called \textit{uniformly globally weakly attractive} if for any neighborhood $S$ of $\A$ and any bounded subset $B$ of $X$ there is a \textit{uniform} finite time $\tau$ so that any trajectory starting in $B$ intersects $S$ within time not larger than $\tau$.
It is crucial in this definition that $\tau$ depends on the set $B$ and does not depend on a particular $x\in B$. 

Uniform limit property (which is closely related to uniform weak attractivity) has already played a central role in the recent proof of characterizations of ISS for general infinite-dimensional control systems \cite{MiW17b} and in the theory of non-coercive Lyapunov functions for such systems \cite{MiW17a}. On the other hand, uniform weak attractivity was implicitly used in \cite[Proposition 2.2]{Tee13} and in \cite[Proposition 3]{SuT16} for characterizations of uniform global asymptotic stability of \textsl{finite-dimensional} hybrid and stochastic systems, and it is closely connected to the notion of recurrent sets \cite{BhS02,Kha11,TSS14}.

In this paper we use the concept of uniform weak attractivity to derive criteria for pUGAS of wide classes of distributed parameter systems. Our main technical result is that uniform global weak attractivity of a bounded set $\A$ together with a robust forward completeness implies the existence of a superset of $\A$ which is bounded, invariant and uniformly globally attractive. Using this fact \textit{we prove that existence of a bounded globally uniformly weakly attractive set is equivalent to practical UGAS of a system}.

Making a bridge to the framework of non-coercive Lyapunov functions, initiated in \cite{MiW17a}, \textit{we show that for a robustly forward complete system existence of a non-coercive Lyapunov function implies pUGAS.}

Although our main intention is to give novel criteria for practical UGAS, we show how one can modify the arguments to obtain the characterizations of UGAS.
{\it We specialize our results to linear infinite-dimensional systems and to ODEs with uniformly bounded disturbances. In the latter case, pUGAS is equivalent to the mere existence of a bounded globally weakly attractive set}.

\subsection{Notation}

The following notation will be used throughout these notes. Denote $\R_+:=[0,+\infty)$. 
For an arbitrary set $S$ and any $n\in\N$ the $n$-fold Cartesian product is $S^n:=S\times\ldots\times S$. 
The norm in a normed linear space $X$ is denoted by $\|\cdot\|$. The closure of a set $S \subset X$ w.r.t. norm $\|\cdot\|$ is denoted by $\overline{S}$.

For each $\A \subset X$ and any $x \in X$ we define the distance between $x$ and $\A$ by $\|x\|_{\A}:=\inf_{y\in \A}\|x-y\|$.
Define also $\|\A\|:=\sup_{x\in\A} \|x\|$.
The open ball in a normed linear space $X$ with radius $r$ around $\A \subset X$ is denoted by $B_r(\A):=\{x\in X \ :  \|x\|_{\A}<r\}$.
For $x\in X$ we denote $B_r(x) := B_r(\{x\})$ and $B_r:=B_r(0)$.

We use the comparison functions formalism:
\begin{equation*}
\begin{array}{ll}
{\K} &:= \left\{\gamma:\R_+\rightarrow\R_+:\ \gamma\mbox{ is continuous, strictly} \right.  \\
&\phantom{aaaaaaaaaaaaaaaaaaaa}\left. \mbox{ increasing and } \gamma(0)=0 \right\} \\
{\K_{\infty}}&:=\left\{\gamma\in\K:\ \gamma\mbox{ is unbounded}\right\}\\
{\LL}&:=\left\{\gamma:\R_+\rightarrow\R_+:\ \gamma\mbox{ is continuous and strictly}\right.\\
&\phantom{aaaaaaaaaaaaaaaa} \text{decreasing with } \lim\limits_{t\rightarrow\infty}\gamma(t)=0\}\\
{\KL} &:= \left\{\beta:\R_+\times\R_+\rightarrow\R_+:\ \beta \mbox{ is continuous,}\right.\\
&\phantom{aaaaaaaa}\left.\beta(\cdot,t)\in{\K},\ \beta(r,\cdot)\in {\LL},\ \forall t\geq 0,\ \forall r >0\right\} \\
\end{array}
\end{equation*}

\section{Preliminaries}
\label{sec:problem-statement}

The concept of a (time-invariant) system we understand in the following way:
\begin{definition}
\label{Steurungssystem}
Consider the triple $\Sigma=(X,\Dc,\phi)$, consisting of 
\begin{enumerate}[label=(\roman*)]
	\item A normed linear space $(X,\|\cdot\|)$, called the {state space}, endowed with the norm $\|\cdot\|$.
	\item A set of  disturbance values $D$, which is a nonempty subset of a certain normed linear space.
	\item A {space of disturbances} $\Dc \subset \{d:\R_+ \to D\}$
          satisfying the following two axioms.
\begin{itemize}
  \item \textit{The axiom of shift invariance} states that for all $d \in \Dc$ and all $\tau\geq0$ the time
shift $d(\cdot + \tau)$ is in $\Dc$.
  \item \textit{The axiom of concatenation} is defined by the requirement that for all $d_1,d_2 \in \Dc$ and for all $t>0$ the concatenation of $d_1$ and $d_2$ at time $t$
\begin{equation}
d(\tau) := 
\begin{cases}
d_1(\tau), & \text{ if } \tau \in [0,t], \\ 
d_2(\tau-t),  & \text{ otherwise},
\end{cases}
\label{eq:Composed_Input}
\end{equation}
belongs to $\Dc$.
\end{itemize}

	\item A transition map $\phi:\R_+ \times X \times \Dc \to X$.
\end{enumerate}
The triple $\Sigma$ is called a system, if the following properties hold:
\begin{sysnum}
	\item Forward completeness: for every $(x,d) \in X \times \Dc$ and
          for all $t \geq 0$ the value $\phi(t,x,d) \in X$ is well-defined.
	\item\label{axiom:Identity} The identity property: for every $(x,d) \in X \times \Dc$
          it holds that $\phi(0, x,d)=x$.
	\item Causality: for every $(t,x,d) \in \R_+ \times X \times
          \Dc$, for every $\tilde{d} \in \Dc$, such that $d(s) =
          \tilde{d}(s)$, $s \in [0,t]$ it holds that $\phi(t,x,d) = \phi(t,x,\tilde{d})$.
	\item \label{axiom:Continuity} Continuity: for each $(x,d) \in X \times \Dc$ the map $t \mapsto \phi(t,x,d)$ is continuous  on $\R_+$.
		\item \label{axiom:Cocycle} The cocycle property: for all $t,h \geq 0$, for all
                  $x \in X$, $d \in \Dc$ we have
$\phi(h,\phi(t,x,d),d(t+\cdot))=\phi(t+h,x,d)$.
\end{sysnum}
\end{definition}
Here  $\phi(t,x,d)$ denotes the state of a system $\Sigma=(X,\Dc,\phi)$ at the moment $t \in
\R_+$ corresponding to the initial condition $x \in X$ and the disturbance $d \in \Dc$.

Definition~\ref{Steurungssystem} covers a very broad class of systems, including differential equations in Banach spaces, time-delay systems, ODEs, many classes of partial differential equations etc.

\begin{remark}
\label{rem:axiom_of_concatenation}
The axiom of concatenation restricts the choice of a space of admissible inputs. Possible candidates are e.g. $L_p(\R_+,D)$, for $p\in[0,+\infty]$ or $PC(\R_+,D)$ (the class of piecewise-continuous functions which are right-continuous). At the same time, the space of continuous functions $C(\R_+,D)$ does not satisfy the axiom of concatenation (but is used in control theory much less frequently than $L_p$ and $PC$ spaces).
This axiom has been used only in Remark~\ref{rem:Invariance_Aeps} in order to prove invariance of reachability sets.
Hence, the results developed in this paper remain valid if instead of the axiom of concatenation we assume in advance that reachability sets are invariant.  
\end{remark}



%

For each $T\geq 0$, and each subset $S\subseteq X$, we define the sets of states which can be reached from $S$ under acting disturbances from $\Dc$ at time not exceeding $T$:
\[
\Rc^T(S):=\lbrace \phi(t,x,d):\ 0\leq t\leq T,\ d\in\Dc,\ x\in S\rbrace,
\]
\vspace{-7mm}
\begin{align*}
\Rc(S) :=\bigcup_{T\geq 0}\Rc^T(S) = \lbrace \phi(t,x,d):\ t\geq 0,\ d\in\Dc,\ x\in S\rbrace.
\end{align*}


Many classes of systems exhibit a stronger version of forward completeness, see \cite[p. 31]{KaJ11b}
\begin{definition}
\label{Def_RFC}
A system $\Sigma=(X,\Dc,\phi)$ is called robustly forward complete (RFC) 
if for any $C>0$ and any $\tau>0$ it holds that $\|\Rc^\tau(\overline{B_C})\| <\infty$.
\end{definition}

The next technical lemma will be useful in the sequel.	
\begin{lemma}
\label{lem:Norms_Changing}
For each $\A\subset X$ and $x\in X$ it holds that
\begin{eqnarray}
\|x\| - \|\A\| \leq \|x\|_{\A} \leq \|x\| + \|\A\|.
\label{eq:Relations_between_norms_1}
\end{eqnarray}
\vspace{-8mm}
\begin{eqnarray}
\|x\|_{\A} - \|\A\| \leq \|x\| \leq \|x\|_{\A} + \|\A\|.
\label{eq:Relations_between_norms_2}
\end{eqnarray}
\end{lemma}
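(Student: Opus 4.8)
The statement is a direct consequence of the triangle inequality together with the definitions of $\|\cdot\|_{\A}$ and $\|\A\|$, so the plan is simply to bound $\|x-y\|$ from above and below for each fixed $y\in\A$ and then pass to the infimum over $y$. Throughout I assume $\A\neq\emptyset$, so that $\|x\|_{\A}$ is a genuine infimum over a nonempty set; the degenerate case $\A=\emptyset$ is excluded by the usual conventions.

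For the right-hand inequality of \eqref{eq:Relations_between_norms_1}, I would fix an arbitrary $y\in\A$ and use $\|x-y\|\leq\|x\|+\|y\|\leq\|x\|+\|\A\|$, where the last step invokes $\|y\|\leq\|\A\|=\sup_{z\in\A}\|z\|$. Since this holds for every $y\in\A$, taking the infimum over $y$ yields $\|x\|_{\A}\leq\|x\|+\|\A\|$. For the left-hand inequality, the reverse triangle inequality gives $\|x-y\|\geq\|x\|-\|y\|\geq\|x\|-\|\A\|$ for each $y\in\A$, and passing to the infimum again produces $\|x\|_{\A}\geq\|x\|-\|\A\|$. This establishes \eqref{eq:Relations_between_norms_1}.

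The second chain \eqref{eq:Relations_between_norms_2} then requires no independent argument: it is obtained from \eqref{eq:Relations_between_norms_1} by elementary rearrangement. Indeed, the bound $\|x\|-\|\A\|\leq\|x\|_{\A}$ rewrites as $\|x\|\leq\|x\|_{\A}+\|\A\|$, and the bound $\|x\|_{\A}\leq\|x\|+\|\A\|$ rewrites as $\|x\|_{\A}-\|\A\|\leq\|x\|$.

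Since every individual step is a one-line comparison, I do not expect any substantial obstacle. The only points deserving a little care are the direction of the inequalities when the supremum bound $\|y\|\leq\|\A\|$ is inserted into the triangle inequality, and the fact that taking the infimum over $y\in\A$ preserves both bounds simultaneously: an upper bound valid for every term dominates the infimum, while a constant lower bound valid for every term is itself a lower bound for the infimum.
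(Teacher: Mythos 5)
Your proof is correct and follows essentially the same route as the paper's: both derive \eqref{eq:Relations_between_norms_1} from the triangle inequality (and its reverse form) combined with the bound $\|y\|\leq\|\A\|$ for $y\in\A$, then take the infimum over $\A$, and both obtain \eqref{eq:Relations_between_norms_2} from \eqref{eq:Relations_between_norms_1} by rearrangement. Your explicit remark on the nonemptiness of $\A$ and on how infima interact with upper and lower bounds is a small touch of extra care not spelled out in the paper, but the substance is identical.
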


\begin{proof}
For any $x,a\in X $ it holds that
\[
\|x\| \leq \|x-a\| + \|a\| \leq \|x-a\| + \|\A\|.
\]	
Taking $\inf_{a\in \A}$, we obtain $\|x\| {-} \|\A\| \leq \inf_{a\in \A} \|x-a\| {=} \|x\|_{\A}.$
Computing $\inf_{a\in \A}$ in the inequality $\|x-a\| \leq \|x\| + \|a\|_\A$ we show \eqref{eq:Relations_between_norms_1}. Inequality \eqref{eq:Relations_between_norms_2} follows from \eqref{eq:Relations_between_norms_1}.
\end{proof}

\begin{proposition}
\label{prop:RFC_equivalence_wrt_A}
Let $\Sigma$ be a system. $\Sigma$ is RFC if and only if there is a bounded set $\A \subset X$ so that 
\begin{eqnarray*}
\hspace{-4mm} C>0,\ \tau>0 \srs \sup_{\|x\|_{\A}\leq C,\ d \in \Dc,\ t \in [0,\tau]}\hspace{-7mm}\|\phi(t,x,d)\|_{\A} < \infty.
\end{eqnarray*}
\end{proposition}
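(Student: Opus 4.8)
The plan is to prove the two implications separately, with the whole argument resting on the elementary comparison between the norm $\|\cdot\|$ and the distance $\|\cdot\|_{\A}$ supplied by Lemma~\ref{lem:Norms_Changing}.

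For the necessity direction ($\Rightarrow$), I would simply take $\A=\{0\}$. This set is bounded, and since $\|x\|_{\{0\}}=\|x\|$ and $\|\{0\}\|=0$, the displayed condition reduces verbatim to the definition of RFC (Definition~\ref{Def_RFC}) written as a supremum. Hence no further work is needed for this direction.

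The content lies in the sufficiency direction ($\Leftarrow$). Assume a bounded $\A$ satisfies the stated property, and fix arbitrary $C>0$, $\tau>0$; the goal is $\|\Rc^\tau(\overline{B_C})\|<\infty$, i.e. $\sup_{\|x\|\leq C,\,d\in\Dc,\,t\in[0,\tau]}\|\phi(t,x,d)\|<\infty$. First I would translate the ball $\overline{B_C}$ into an $\A$-sublevel set: for every $x$ with $\|x\|\leq C$, the right-hand inequality of \eqref{eq:Relations_between_norms_1} yields $\|x\|_{\A}\leq \|x\|+\|\A\|\leq C+\|\A\|=:C'$, which is finite precisely because $\A$ is bounded. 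Applying the hypothesis with the constant $C'$ then produces a finite bound $M:=\sup_{\|x\|_{\A}\leq C',\,d\in\Dc,\,t\in[0,\tau]}\|\phi(t,x,d)\|_{\A}<\infty$. Finally I would translate back: the right-hand inequality of \eqref{eq:Relations_between_norms_2} gives $\|\phi(t,x,d)\|\leq \|\phi(t,x,d)\|_{\A}+\|\A\|\leq M+\|\A\|$ for all admissible $x,d,t$, and the right-hand side is finite, which establishes RFC.

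There is no serious obstacle; the proposition is essentially a change-of-basepoint statement, and boundedness of $\A$ is exactly what makes both translations harmless. The only points requiring care are bookkeeping: invoking boundedness of $\A$ (so that $\|\A\|<\infty$) at each of the two translation steps, and checking that the index sets of the suprema match correctly when passing through the intermediate constant $C'$.
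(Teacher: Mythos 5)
Your proposal is correct and takes essentially the same route as the paper: both directions rest on the norm-translation inequalities of Lemma~\ref{lem:Norms_Changing}, and your sufficiency argument is precisely the ``analogous'' converse that the paper leaves implicit. The only cosmetic difference is that for necessity you choose $\A=\{0\}$, which makes that direction tautological, whereas the paper verifies the condition for an arbitrary bounded $\A$ by the same translation.
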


\begin{proof}
Let $\Sigma$ be an RFC system. By means of Lemma~\ref{lem:Norms_Changing} we obtain
\begin{multline*}
\sup_{\|x\|_{\A}\leq C,\ d \in \Dc,\ t \in [0,\tau]}\hspace{-7mm}\|\phi(t,x,d)\|_{\A} \\
\leq \sup_{\|x\|\leq C + \|\A\|,\ d \in \Dc,\ t \in [0,\tau]}\hspace{-7mm}\big(\|\phi(t,x,d)\| + \|\A\|\big) < \infty.
\end{multline*}
%
%
The proof of a converse implication is analogous.
\end{proof}

\begin{definition}
\label{def:0-invariance}
Let $\Sigma=(X,\Dc,\phi)$ be a system.
\begin{enumerate}[label=(\roman*)]
	\item A set $\A \subset X$ is called invariant if $\Rc(\A) \subset \A$.
	\item An invariant set $\A \subset X$ is called robustly invariant if $\forall \eps >0$, $\forall h>0$  $\exists \delta = \delta (\eps,h)>0$, so that:
\begin{eqnarray}
\hspace{-15mm} t\in[0,h],\ \|x\|_{\A} \leq \delta,\ d\in\Dc \; \Rightarrow \;  \|\phi(t,x,d)\|_{\A} \leq \eps.
\label{eq:RobEqPoint}
\end{eqnarray}
	\item A robust invariant point $x_0 \in X$ is called a robust equilibrium.
\end{enumerate}
\end{definition}

\begin{remark}
\label{rem:Robustness_of_Equilibirum}
  Robustness of a zero equilibrium is equivalent to continuity of a function $x\mapsto \sup_{t\in[0,h],\ d\in \Dc}\|\phi(t,x,d)\|$ at $x=0$, for all $h>0$. An example of a system with a non-robust equilibrium is given in Example~\ref{exam:UGATT_Robustness_Invariant_Sets}.
\end{remark}

We will investigate the following stability properties:

\begin{definition}{}
    \label{d:stability_new}
Consider a system $\Sigma=(X,\Dc,\phi)$. A set $\A \subset X$ is called
\begin{enumerate}[label=(\roman*)]
  \item  Lagrange stable, if there are $\sigma \in \Kinf$ and $c>0$:
\[
\hspace{-5mm} x\in X,\ t \geq 0,\ d\in\Dc \srs  \|\phi(t,x,d)\|_{\A} \leq \sigma(\|x\|_{\A}) + c.
\]
  \item uniformly (locally) stable (ULS), if $\forall \eps >0$ \ $\exists \delta>0$:
\begin{eqnarray}
\hspace{-10mm} \|x\|_\A \leq \delta,\ d\in\Dc,\ t \geq 0 \srs\|\phi(t,x,d)\|_\A \leq \eps.
\label{eq:Uniform_Stability}
\end{eqnarray}

\item uniformly globally stable (UGS), if it is Lagrange stable with $c=0$.

\item uniformly globally asymptotically stable (UGAS)
if there is a $\beta \in \KL$ s.t. for all $x \in X$, $d\in \Dc$ and $t \geq 0$
\begin{equation}
\label{eq:UGAS_wrt_A_estimate}
\hspace{-1mm}\|\phi(t,x,d)\|_\A \leq \beta(\|x\|_\A,t).
\end{equation}

\item practically uniformly globally asymptotically stable (pUGAS) 
if there are $\beta \in \KL$ and $c>0$ such that for all $x \in X$, $d\in \Dc$ and $t \geq 0$
\begin{equation}
\label{eq:practical_UGAS_wrt_A_estimate}
\hspace{-1mm} \|\phi(t,x,d)\|_\A \leq \beta(\|x\|_\A,t) +c.
\end{equation}

\item globally weakly attractive, if
\begin{eqnarray}
\hspace{-12mm} x\in X,\ d \in\Dc,\ \eps>0 \srs \exists t: \ \|\phi(t,x,d)\|_{\A} \leq \eps.
\label{eq:LIM_inf_form}
\end{eqnarray}

\item \label{def:UniformGlobalWeakAttractivity} uniformly globally weakly attractive, if for every $\eps>0$ and
for every $r>0$ there is a $\tau = \tau(\eps,r)$ such that 
\begin{eqnarray}
\hspace{-13mm} \|x\|_{\A}\leq r,\ d\in\Dc \ \Rightarrow \ \exists t  \leq \tau:\ \|\phi(t,x,d)\|_{\A} \leq \eps.
\label{eq:Uniform_weak2}
\end{eqnarray}
\item \label{def:UniformGlobalAttractivity} uniformly globally attractive (UGATT), if for any $r,\eps >0$ there exists $\tau=\tau(r,\eps)$ so that
\begin{eqnarray}
\hspace{-16mm} \|x\|_{\A} \leq r,\ d\in\Dc,\ t \geq \tau(r,\eps) \ \Rightarrow \ \|\phi(t,x,d)\|_{\A} \leq \eps.
\label{eq:UAG_with_zero_gain}
\end{eqnarray}
\end{enumerate}
\end{definition}

\begin{remark}
\label{rem:UGS_equals_LagrStab_and_ULS}
For any system and any $\A\subset X$ the following holds:
\begin{center}
$\A$ is UGS \quad$\Iff$\quad $\A$ is ULS $\ \wedge\ $ $\A$ is Lagrange stable,
\end{center}
see \cite[Lemma 1.2]{SoW96} for a similar result.
\end{remark}

\begin{remark}
\label{rem:Invariance_of_pUGAS_Lagrange_Stability_wrt_choice_of_aboundedset}
  Note that a bounded set $\A\subset X$ is Lagrange stable if and only if $\{0\}$ is Lagrange stable (which easily follows from Lemma~\ref{lem:Norms_Changing}).
 Similarly $\A\subset X$ is pUGAS iff $\{0\}$ is pUGAS. In view of these facts \textit{we will call a system $\Sigma$ Lagrange stable or pUGAS respectively if there is a bounded subset of $X$ which possesses such properties}.
\end{remark}

\begin{definition}
\label{def:Unif_Ultimate_Boundedness}
A system $\Sigma=(X,\Uc,\phi)$ is called uniformly ultimately bounded, if there is $K>0$ so that for each $r>0$ there is $T=T(r)$:
\begin{eqnarray}
\hspace{-5mm} \|x\| \leq r,\ d \in\Dc,\ t\geq T \srs  \|\phi(t,x,d)\|\leq K.
\label{eq:Unif_Ult_Boundedness}
\end{eqnarray}  
\end{definition}

\begin{remark}
\label{rem:UGATT_implies_Unif_Boundedness}
Let $\Sigma$ be any system. Existence of a bounded UGATT set implies uniform ultimate boundedness of $\Sigma$.
Indeed, let $\A$ be a bounded invariant UGATT set. Pick $\eps:=1$ 
and set $T(r):=\tau(r,1)$. Then
\begin{eqnarray*}
\hspace{-8mm} \|x\|_\A \leq r,\ d \in\Dc,\ t\geq T(r) \srs  \|\phi(t,x,d)\|\leq \|\A\| + 1,
\end{eqnarray*}
and Lemma~\ref{lem:Norms_Changing} ensures uniform ultimate boundedness \text{of $\Sigma$} as:
\begin{eqnarray*}
\hspace{-9mm} \|x\| \leq r,\ d \in\Dc,\ t\geq T(r{+}\|\A\|) \srs  \|\phi(t,x,d)\|\leq \|\A\| {+} 1.
\end{eqnarray*}
\end{remark}

%
%
%
%
%
%

Let a system $\Sigma$ be fixed and let $V:X \to\R$ be continuous.
Given $x\in X$ and $d\in \mathcal{D}$, we consider the (right-hand lower) Dini
derivative of the continuous function $t \mapsto V(\phi(t,x,d))$ at $t=0$:
\begin{equation}
\label{UGAS_wrt_D_LyapAbleitung}
\dot{V}_d(x):=\mathop{\underline{\lim}} \limits_{t \rightarrow +0} {\frac{1}{t}\big(V(\phi(t,x,d))-V(x)\big) }.
\end{equation}
{\nolinebreak We call this the Dini derivative of $V$ along trajectories \text{of $\Sigma$}.}

Recently in \cite{MiW17a} a novel concept of a non-coercive Lyapunov function has been introduced.
\begin{definition}
\label{def:UGAS_LF_With_Disturbances}
Let $\A \subset X$ be a bounded set.
A continuous function $V:X \to \R_+$ is called a \textit{non-coercive Lyapunov function (or functional)} for a system $\Sigma=(X,\Dc,\phi)$ w.r.t. $\A$,  if there exist
$\psi_2 \in \Kinf$ and $\alpha \in \K$
such that $V(x)=0$ for all $x\in \A$, the inequalities 
\begin{equation}
    \label{eq:1}
    0 < V(x) \leq \psi_2(\|x\|_{\A}) \quad \forall x \in X \backslash \A
\end{equation}
hold
and the Dini derivative of $V$  along trajectories of $\Sigma$ satisfies 
\begin{equation}
\label{DissipationIneq_UGAS_With_Disturbances}
\dot{V}_d(x) \leq -\alpha(\|x\|_{\A})
\end{equation}
for all $x \in X\backslash\A$ and all $d \in \Dc$. 
\end{definition}

The importance of non-coercive Lyapunov functions is due to the following result, see \cite{MiW17a}:

\begin{proposition}
    \label{t:noncoeLT}
    Consider an RFC system $\Sigma=(X,\Dc,\phi)$	and assume that $\{0\}$ is its robust equilibrium.
		If $V$ is a non-coercive Lyapunov function for $\Sigma$ w.r.t $\{0\}$, then $\{0\}$ is UGAS.
\end{proposition}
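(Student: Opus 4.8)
The plan is to obtain UGAS of $\{0\}$ in three stages: extract uniform global weak attractivity from the Lyapunov functional, recover the local behaviour near $0$ from robustness of the equilibrium, and control finite-time excursions via robust forward completeness; throughout write $\|\cdot\|$ for $\|\cdot\|_{\{0\}}$. First I would convert the infinitesimal decrease \eqref{DissipationIneq_UGAS_With_Disturbances} into an integral decay estimate. Using the cocycle property and shift invariance, the continuous map $s\mapsto V(\phi(s,x,d))$ has right lower Dini derivative bounded by $-\alpha(\|\phi(s,x,d)\|)$ at every $s$ (apply the definition of $\dot V_{d'}$ with $d'=d(s+\cdot)\in\Dc$), so a standard comparison argument yields
\begin{equation*}
V(\phi(t,x,d)) \leq V(x) - \int_0^t \alpha(\|\phi(s,x,d)\|)\,ds .
\end{equation*}
Fix now $\eps,r>0$ and take $\|x\|\le r$. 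If a trajectory satisfied $\|\phi(s,x,d)\|>\eps$ for all $s\in[0,t]$, then monotonicity of $\alpha\in\K$ together with the upper bound \eqref{eq:1} would force $0\le V(\phi(t,x,d))\le \psi_2(r)-\alpha(\eps)t$, which is impossible once $t>\psi_2(r)/\alpha(\eps)$. Hence every trajectory starting in $\overline{B_r}$ meets $\overline{B_\eps}$ by the uniform time $\tau(\eps,r):=\psi_2(r)/\alpha(\eps)$, i.e.\ $\{0\}$ is uniformly globally weakly attractive.

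The robustness of the equilibrium supplies, for every horizon $h$ and every $\eps$, a $\delta$ so that trajectories from $\overline{B_\delta}$ remain in $\overline{B_\eps}$ on $[0,h]$ — finite-horizon stability only. I would combine this with the first step to obtain genuine uniform local stability (ULS); this is the crux, deferred to the obstacle below. Once ULS with parameter $\delta(\eps)$ is in hand, the cocycle property closes the argument to uniform global attractivity: given $\eps$ pick $\delta=\delta(\eps)$ and $\tau=\tau(r,\delta)$ from the weak attractivity, so any trajectory from $\overline{B_r}$ reaches $\overline{B_\delta}$ at some $t^\ast\le\tau$, and writing $\phi(t,x,d)=\phi(t-t^\ast,\phi(t^\ast,x,d),d(t^\ast+\cdot))$ together with shift invariance gives $\|\phi(t,x,d)\|\le\eps$ for all $t\ge\tau$. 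Thus $\{0\}$ is UGATT with gain-time $\tau(r,\delta(\eps))$.

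To reach UGAS I still require uniform global stability, and here robust forward completeness enters. For $\|x\|\le r$ the excursion over the finite interval $[0,\tau(r,1)]$ stays inside $\Rc^{\tau(r,1)}(\overline{B_r})$, which is bounded by Definition~\ref{Def_RFC}; after the state has entered $\overline{B_1}$ the ULS property keeps it small for all later times. Packaging the resulting bound on $\sup_{t\ge0,\,d\in\Dc}\|\phi(t,x,d)\|$ as a $\Kinf$-function of $r$ gives Lagrange stability, and together with ULS this is UGS by Remark~\ref{rem:UGS_equals_LagrStab_and_ULS}. Finally UGS and UGATT are merged into a single $\KL$-estimate of the form \eqref{eq:UGAS_wrt_A_estimate} in the standard way, which establishes that $\{0\}$ is UGAS.

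The genuinely hard point is establishing ULS. Because $V$ is non-coercive there is no lower bound $\psi_1(\|x\|)\le V(x)$, so small values of $V$ do not control $\|x\|$ and the level sets of $V$ are not trapped in small balls; consequently neither stability nor attractivity of the state norm can be read off from $V$ directly — $V$ delivers only weak attractivity. Reconstructing ULS forces one to reconcile the horizon $h$ in the robustness estimate with the return time $\tau$ from weak attractivity, and these depend on each other circularly. Breaking this loop is exactly where robust forward completeness is indispensable, since it bounds the finite-time excursions uniformly and thereby lets one fix the horizon before invoking weak attractivity.
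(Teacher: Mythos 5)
Your skeleton (weak attractivity from $V$, then ULS, then UGATT via the cocycle property, then Lagrange stability via RFC, then assembly into a $\KL$ bound) has the right shape, and most of its pieces coincide with machinery this paper actually provides: your first stage is exactly Proposition~\ref{prop:noncoeLT_plus_FC_implies_ULIM_v2}, your ULS$+$weak attractivity $\Rightarrow$ UGATT step is the ``$\Leftarrow$'' direction of Proposition~\ref{prop:UGATT_and_Un_approachability_v2}, and Lagrange stability already follows from RFC plus uniform weak attractivity alone (Lemma~\ref{lem:Weak_attr_plus_RFC_imply_pUGS}), so your ``after the state has entered $\overline{B_1}$ the ULS property keeps it small'' is both unnecessary and incorrect as phrased (ULS controls only trajectories starting in $B_{\delta(\eps)}$, not in $\overline{B_1}$). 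The genuine gap is that ULS --- which you yourself call the crux --- is never proved, and the repair you sketch cannot work. You propose to get ULS from RFC $+$ robust equilibrium $+$ uniform weak attractivity, with RFC ``breaking the loop''; but RFC only bounds the \emph{magnitude} of finite-time excursions from bounded sets, it gives no lower bound on the \emph{time} an excursion needs, which is what the circularity between the robustness horizon $h$ and the return time $\tau$ would require. Worse, those three properties genuinely do not imply ULS: for a planar autonomous ODE with an attractive but not Lyapunov-stable equilibrium (Vinograd's example), RFC and robustness of the equilibrium hold automatically, $\{0\}$ is globally weakly attractive since every solution converges to it, hence uniformly globally weakly attractive by this paper's own Proposition~\ref{prop:Approachability_equals_Uniform_Approachability} --- yet $\{0\}$ is not ULS and not UGAS. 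So once you have discarded $V$ and kept only these three consequences, no argument can close the gap; the Lyapunov function must be used a second time.

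The correct closing step re-uses the integral estimate \eqref{eq:Vintbound_2_v2}, which carries strictly more information than weak attractivity: the total decay budget $\int_0^{t}\alpha(\|\phi(s,x,d)\|)\,ds\le\psi_2(\|x\|)$ is small for small $\|x\|$, while robust invariance of $\{0\}$ makes any escape \emph{slow}. Concretely: fix $\eps>0$, let $\delta_2\in(0,\eps)$ be the constant from \eqref{eq:RobEqPoint} for $\A=\{0\}$, $h=1$, and pick $\delta\le\min\{\delta_2,\psi_2^{-1}(\alpha(\delta_2)/2)\}$. If $\|x\|\le\delta$, $d\in\Dc$ and $\|\phi(t^*,x,d)\|>\eps$ for some $t^*$, let $t_1$ be the last time in $[0,t^*]$ with $\|\phi(t_1,x,d)\|\le\delta_2$ (well defined by continuity of the flow and $\|x\|\le\delta_2$). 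By the cocycle property and shift invariance, $\phi(t_1+s,x,d)=\phi\big(s,\phi(t_1,x,d),d(t_1+\cdot)\big)$, so \eqref{eq:RobEqPoint} forces $t^*-t_1>1$, and on $(t_1,t_1+1]$ the norm exceeds $\delta_2$. Then \eqref{eq:Vintbound_2_v2} yields
\begin{equation*}
\alpha(\delta_2)\le\int_{t_1}^{t_1+1}\alpha(\|\phi(s,x,d)\|)\,ds\le\int_0^{t^*}\alpha(\|\phi(s,x,d)\|)\,ds\le\psi_2(\delta)\le\tfrac12\alpha(\delta_2),
\end{equation*}
a contradiction; hence $\{0\}$ is ULS. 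With ULS in hand, your stages 2--3 (equivalently, item \ref{enum:RFC_ULS_UGWATT} of Theorem~\ref{cor:ULIM_RFC_LS_v2} combined with Proposition~\ref{prop:noncoeLT_plus_FC_implies_ULIM_v2}) finish the proof. Note finally that the paper itself does not prove this proposition but cites \cite{MiW17a}; within the paper's toolkit the only missing ingredient is precisely the ULS argument above, and it is the one place where non-coercivity really bites.
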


Non-coercive Lyapunov functions are easier to construct than coercive ones.
For example, construction of Lyapunov functions for linear systems over Banach spaces by means of the Lyapunov equation provides in general merely non-coercive Lyapunov functions, \cite[Theorem 5.1.3]{CuZ95}.

We will use non-coercive Lyapunov functions to derive a sufficient condition for pUGAS of systems with disturbances.

\section{Characterization of pUGAS}
\label{sec:pUGAS_equals_ULIM}

In this section we characterize pUGAS as follows:
\begin{theorem}
\label{thm:RFC_weak_attr_imply_pUGAS}
Let $\Sigma$ be a system. The following statements are equivalent:
\begin{enumerate}[label=(\roman*)]
	\item\label{enum:pUGAS_item1} $\Sigma$ is pUGAS
	\item\label{enum:pUGAS_item11} $\Sigma$ is Lagrange stable and there is a bounded invariant UGATT set.	
	\item\label{enum:pUGAS_item2} $\Sigma$ is RFC and there is a bounded invariant UGATT set.	
	\item\label{enum:pUGAS_item21} $\Sigma$ is RFC and uniformly ultimately bounded.
	\item\label{enum:pUGAS_item3} $\Sigma$ is RFC and there is a bounded uniformly globally weakly attractive set.
\end{enumerate}
\end{theorem}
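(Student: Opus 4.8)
The plan is to prove the cyclic chain $\ref{enum:pUGAS_item1}\Rightarrow\ref{enum:pUGAS_item21}\Rightarrow\ref{enum:pUGAS_item3}\Rightarrow\ref{enum:pUGAS_item2}\Rightarrow\ref{enum:pUGAS_item11}\Rightarrow\ref{enum:pUGAS_item1}$, which makes all five statements equivalent. Two arrows are essentially immediate. For $\ref{enum:pUGAS_item1}\Rightarrow\ref{enum:pUGAS_item21}$ I would read off RFC by evaluating the pUGAS estimate at $t=0$ and passing between $\|\cdot\|$ and $\|\cdot\|_{\A}$ via Lemma~\ref{lem:Norms_Changing}, and obtain uniform ultimate boundedness by letting $t\to\infty$, so that $\beta(\|x\|_{\A},t)\to 0$ and $\|\phi(t,x,d)\|\le c+1+\|\A\|=:K$ for all large $t$, uniformly on bounded balls (cf.\ Remark~\ref{rem:UGATT_implies_Unif_Boundedness}). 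For $\ref{enum:pUGAS_item21}\Rightarrow\ref{enum:pUGAS_item3}$ the ball $\overline{B_K}$ produced by uniform ultimate boundedness is bounded and is entered in a uniform finite time, hence is uniformly globally weakly attractive in the sense of Definition~\ref{d:stability_new}\ref{def:UniformGlobalWeakAttractivity}, while RFC is inherited unchanged.

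The heart of the argument is $\ref{enum:pUGAS_item3}\Rightarrow\ref{enum:pUGAS_item2}$. Given a bounded uniformly globally weakly attractive set $\A$, I would fix $\eps_0>0$ and set $\Omega:=\Rc(\overline{B_{\eps_0}(\A)})$. Invariance is built in, since $\Rc(\Rc(S))=\Rc(S)$ follows from the cocycle property and the axiom of concatenation. The delicate point is boundedness: RFC (Definition~\ref{Def_RFC}) only controls $\Rc^\tau$ over finite horizons, whereas $\Omega$ is a union over all times. Here I would exploit the \emph{uniformity} of weak attractivity: with $\tau_0:=\tau(\eps_0,\eps_0)$, every trajectory issuing from $\overline{B_{\eps_0}(\A)}$ returns to $\overline{B_{\eps_0}(\A)}$ within time $\tau_0$; applying this at each return point (legitimate by shift invariance of $\Dc$ and the cocycle property) shows that such a trajectory never leaves $\Rc^{\tau_0}(\overline{B_{\eps_0}(\A)})$. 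Consequently $\Omega\subseteq \Rc^{\tau_0}(\overline{B_{\eps_0}(\A)})$, which is bounded by RFC and Lemma~\ref{lem:Norms_Changing}. Finally $\Omega$ is UGATT: for $\|x\|_{\Omega}\le r$ one has $\|x\|_{\A}\le r+\|\Omega\|_{\A}$, so weak attractivity drives the trajectory into $\overline{B_{\eps_0}(\A)}\subseteq\Omega$ within a time $\tau(\eps_0,r+\|\Omega\|_{\A})$ depending only on $r$; from that instant the trajectory stays in the invariant set $\Omega$, so $\|\phi(t,x,d)\|_{\Omega}=0$, which is in fact finite-time attraction and a fortiori UGATT.

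It remains to close the two comparison-function arrows, using Remark~\ref{rem:Invariance_of_pUGAS_Lagrange_Stability_wrt_choice_of_aboundedset} to refer all properties to the same bounded set $\A$. For $\ref{enum:pUGAS_item2}\Rightarrow\ref{enum:pUGAS_item11}$ I would show that RFC together with a bounded invariant UGATT set forces Lagrange stability: for $t$ beyond the attraction time $\tau(\|x\|_{\A},1)$ the distance $\|\phi(t,x,d)\|_{\A}$ is at most $1$, while on the complementary compact time interval RFC bounds $\|\phi(t,x,d)\|$ by $\|\Rc^{\tau(\|x\|_{\A},1)}(\overline{B_{\|x\|_{\A}+\|\A\|}})\|$, and these two bounds assemble into an estimate $\sigma(\|x\|_{\A})+c$ with $\sigma\in\Kinf$. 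For $\ref{enum:pUGAS_item11}\Rightarrow\ref{enum:pUGAS_item1}$ I would set $\omega(r,t):=\sup\{\|\phi(t,x,d)\|_{\A}:\|x\|_{\A}\le r,\ d\in\Dc\}$, finite by Lagrange stability and tending to $0$ as $t\to\infty$ by UGATT, monotonize it in $t$ to $\bar\omega(r,t):=\sup_{s\ge t}\omega(r,s)$, and split off the Lagrange constant by $\bar\omega(r,t)\le\max\{\bar\omega(r,t)-c,0\}+c$. The residual $\max\{\bar\omega(r,t)-c,0\}$ is nondecreasing in $r$, nonincreasing in $t$, bounded above by $\sigma(r)$ and tending to $0$ as $t\to\infty$; dominating it by a $\KL$ function through the standard comparison lemma yields the pUGAS estimate, the offset $c$ being exactly the Lagrange constant.

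The main obstacle is the boundedness step in $\ref{enum:pUGAS_item3}\Rightarrow\ref{enum:pUGAS_item2}$: upgrading the ``intersects a neighborhood'' content of weak attractivity to a bound that is \emph{global in time}. This is precisely where the uniformity of the weak attraction time and the cocycle/concatenation structure are indispensable, and where the infinite-dimensional setting (no compactness of bounded balls) would otherwise block the classical finite-dimensional arguments.
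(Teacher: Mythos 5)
Your cyclic decomposition and its supporting ingredients are in substance the same as the paper's own proof (Proposition~\ref{prop:from_ULIM_to_UGATT} for upgrading weak attractivity to a bounded invariant UGATT set via reachability sets, Lemma~\ref{lem:Weak_attr_plus_RFC_imply_pUGS} for Lagrange stability, Lemma~\ref{lem:UGATT_plus_LagrangeSt_imply_pUGAS} for the $\KL$-majorization), merely arranged in a different cyclic order; your arrows \ref{enum:pUGAS_item1}$\Rightarrow$\ref{enum:pUGAS_item21}$\Rightarrow$\ref{enum:pUGAS_item3} and the two comparison-function steps are sound. The problem lies exactly in the step you call the heart of the argument.

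With $\tau_0:=\tau(\eps_0,\eps_0)$ you assert that every trajectory issuing from $\overline{B_{\eps_0}(\A)}$ \emph{returns} to $\overline{B_{\eps_0}(\A)}$ within time $\tau_0$, and you iterate this at each return point. But item~\ref{def:UniformGlobalWeakAttractivity} of Definition~\ref{d:stability_new} only guarantees $\exists t\le\tau_0$ with $\|\phi(t,x,d)\|_{\A}\le\eps_0$, and for a point that already lies in that ball this is satisfied by $t=0$. So the statement you invoke is vacuous at precisely the points where you apply it: your induction produces ``return times'' $s_0\le s_1\le\dots$ with $s_{k+1}-s_k\le\tau_0$ which may all equal $0$ (or stay bounded), and then times beyond $\sup_k s_k+\tau_0$ are not covered, so the inclusion $\Omega\subseteq\Rc^{\tau_0}\big(\overline{B_{\eps_0}(\A)}\big)$ does not follow. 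Weak attractivity tested only at points \emph{inside} the target ball can never exclude a trajectory that starts in the ball, leaves instantly, and never comes back. To close the gap you must manufacture returns at \emph{strictly positive}, uniformly bounded times. Two standard devices: (a) aim at a strictly smaller ball, say of radius $\eps_0/2$, so that weak attractivity applied at a point with $\|x\|_{\A}\in(\eps_0/2,\eps_0]$ forces $t>0$, and treat points with $\|x\|_{\A}\le\eps_0/2$ by continuity of $t\mapsto\phi(t,x,d)$ --- this is exactly why the paper constructs a continuous, monotone $\tau$ and claims a return time $\tilde t\in(0,\tau(\eps,\eps))$ rather than merely $\tilde t\le\tau(\eps,\eps)$; or (b) apply weak attractivity not at $x$ but at $z:=\phi(\delta,x,d)$ for a fixed $\delta>0$, whose distance to $\A$ is bounded by some uniform $M$ thanks to RFC (Proposition~\ref{prop:RFC_equivalence_wrt_A}), producing a return at a time in $[\delta,\delta+\tau(\eps_0,M)]$. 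With either device, together with the observation that the set of visiting times is closed (so a last visit before any given time exists, enabling a maximality argument), your covering argument goes through and the rest of your proof --- including the finite-time UGATT conclusion for $\Omega$ --- stands as written.
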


We start with
\begin{definition}
\label{def:first_prolongation}
For each $\eps>0$ and any $\A\neq \emptyset$ define 
\begin{eqnarray}
\A_\eps:=\Rc\big(B_\eps(\A)\big),\qquad P_+(\A):=\bigcap_{\eps>0} \A_\eps. 
\label{eq:A_eps_definition}
\end{eqnarray}
\end{definition}

\begin{remark}
\label{rem:Invariance_Aeps}
If $\A\neq\emptyset$, then for any $\eps>0$ the set $\A_\eps$ is invariant. Indeed, pick any $x\in\A_\eps$, $t\geq0$ and $d\in\Dc$.
Then there are $t_1\geq0$, $x_1\in\A$ and $d_1\in\Dc$ so that $x=\phi(t_1,x_1,d_1)$.
Due to the cocycle axiom ($\Sigma$\ref{axiom:Cocycle}) we have that $\phi(t,x,d)=\phi(t+t_1,x_1,d_2)$, where $d_2$ is a concatenation of $d_1$ and $d$ (which belongs to $\Dc$ in view of the axiom of concatenation). 
Due to the axiom of shift-invariance $d_2\in \Dc$, and thus $\phi(t,x,d) \in\A_\eps$.
\end{remark}

We start by showing invariance of $P_+(\A)$.
%

\begin{lemma}
\label{lem:properties_A_epsilon_0}
 Let a system $\Sigma$ be given and let $\A \subset X$, $\A \neq \emptyset$. Then $P_+(\A)$ is a nonempty invariant set.
\end{lemma}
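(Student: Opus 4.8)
The statement has two parts---nonemptiness and invariance---and the plan is to dispatch both by exploiting that $\A$ sits inside every $\A_\eps$ and that each $\A_\eps$ is already known to be invariant.

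For nonemptiness, I would first argue that $\A \subseteq P_+(\A)$. Indeed, fix any $a \in \A$ and any $\eps>0$. Since $\|a\|_{\A}=0<\eps$, we have $a \in B_\eps(\A)$, and the identity property ($\Sigma$\ref{axiom:Identity}) gives $a = \phi(0,a,d) \in \Rc\big(B_\eps(\A)\big)=\A_\eps$ for an arbitrary $d \in \Dc$. As $\eps>0$ was arbitrary, $a \in \bigcap_{\eps>0}\A_\eps = P_+(\A)$. Hence $\A \subseteq P_+(\A)$, and since $\A \neq \emptyset$ the set $P_+(\A)$ is nonempty.

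For invariance, the key input is Remark~\ref{rem:Invariance_Aeps}, which already establishes that each $\A_\eps$ is invariant. The remaining observation is the elementary fact that an arbitrary intersection of invariant sets is again invariant. Concretely, take any $x \in P_+(\A)$, $t\geq 0$ and $d \in \Dc$. For every $\eps>0$ we have $x \in \A_\eps$, and invariance of $\A_\eps$ yields $\phi(t,x,d) \in \Rc(\A_\eps) \subseteq \A_\eps$. Intersecting over all $\eps>0$ gives $\phi(t,x,d) \in P_+(\A)$, so $\Rc\big(P_+(\A)\big) \subseteq P_+(\A)$, which is exactly invariance.

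There is no real obstacle here: the substantive content---that states reachable from $B_\eps(\A)$ stay inside $\A_\eps$, which relies on the cocycle, shift-invariance and concatenation axioms---has already been absorbed into Remark~\ref{rem:Invariance_Aeps}. The only things left to verify are the general principle that intersections preserve invariance and the trivial inclusion $\A \subseteq \A_\eps$ that powers nonemptiness.
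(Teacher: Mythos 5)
Your proof is correct and follows essentially the same route as the paper's: nonemptiness via the inclusion $\A \subseteq \A_\eps$ for every $\eps>0$ (hence $\A \subseteq P_+(\A)$), and invariance by intersecting the invariant sets $\A_\eps$ from Remark~\ref{rem:Invariance_Aeps}. The only difference is that you spell out the identity-property step behind $\A \subseteq \A_\eps$, which the paper leaves implicit.
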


\begin{proof} Since $\A \subset \A_\eps$ for any $\eps>0$, we have $\A \subset P_+(\A)$, and hence $P_+(\A)$ is nonempty.
For any $\eps>0$ the set $\A_\eps$ is invariant by Remark~\ref{rem:Invariance_Aeps}.

Pick any $x\in \bigcap_{\eps>0} \A_\eps$. Since $\A_\eps$ is invariant for all $\eps>0$, for any $t\geq 0$ and any $d\in\Dc$ it holds that $\phi(t,x,d) \in \A_\eps$, $\eps>0$. Thus, $\phi(t,x,d) \in \bigcap_{\eps>0} \A_\eps=P_+(\A)$, which shows invariance of $P_+(\A)$.
\end{proof}

%
%
%
%
%

Next  we present our main technical result, showing that for any bounded uniformly globally weakly attractive set there is an  invariant superset with a much stronger UGATT property.
\begin{proposition}
\label{prop:from_ULIM_to_UGATT}
Assume that $\Sigma$ is an RFC system and that $\A$ is a bounded uniform global weak attractor 
for $\Sigma$. Then for any $\eps>0$ the set $\A_\eps$ is bounded, invariant and UGATT.
%
\end{proposition}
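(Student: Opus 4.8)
The plan is to verify the three asserted properties of $\A_\eps=\Rc(B_\eps(\A))$ one at a time. Invariance requires nothing new: it is exactly the content of Remark~\ref{rem:Invariance_Aeps}. The substantive part is boundedness; once it is established, the UGATT property will follow quickly from the invariance of $\A_\eps$ together with the weak attractivity of $\A$.

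For boundedness I would fix $x$ with $\|x\|_\A<\eps$, a disturbance $d$, and an arbitrary time $t\geq 0$, and control $\|\phi(t,x,d)\|$ by looking backwards from $t$. Put $r_0:=\eps+\|\A\|$, so that by Lemma~\ref{lem:Norms_Changing} the set $\{z:\|z\|_\A\leq\eps\}$ lies inside the norm ball $\overline{B_{r_0}}$. Let $s^*:=\sup\{s\in[0,t]:\|\phi(s,x,d)\|_\A\leq\eps\}$; this supremum is attained (the map $s\mapsto\|\phi(s,x,d)\|_\A$ is continuous by the continuity axiom ($\Sigma$\ref{axiom:Continuity})) and is well defined since $s=0$ qualifies. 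The key claim is that the excursion length $t-s^*$ is bounded by a constant independent of $x,d,t$. The obstacle is that uniform weak attractivity is trivially satisfied at time $0$ by any point already in $\{z:\|z\|_\A\leq\eps\}$, so it cannot be applied directly at $s^*$ to force a return at a \emph{positive} time. I would circumvent this by evaluating the trajectory at a point that is genuinely \emph{outside} the ball: if $t-s^*>1$, the point $w:=\phi(s^*+1,x,d)$ satisfies $\|w\|_\A>\eps$ (as $s^*$ is the last visit before $t$) and, by RFC applied to $\Rc^{1}(\overline{B_{r_0}})$, also $\|w\|_\A\leq r_2$ for the fixed constant $r_2:=\|\Rc^{1}(\overline{B_{r_0}})\|+\|\A\|$. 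Weak attractivity with source radius $r_2$ and target $\eps$ then yields an $h\leq\bar\tau:=\tau(\eps,r_2)$ with $\|\phi(s^*+1+h,x,d)\|_\A\leq\eps$; were $t-s^*$ larger than $1+\bar\tau$, this would produce a visit to the ball at a time in $(s^*,t)$, contradicting maximality of $s^*$. Hence $t-s^*\leq 1+\bar\tau$, and writing $\phi(t,x,d)=\phi(t-s^*,\phi(s^*,x,d),d(s^*+\cdot))$ via the cocycle property ($\Sigma$\ref{axiom:Cocycle}) places $\phi(t,x,d)$ in $\Rc^{1+\bar\tau}(\overline{B_{r_0}})$, a bounded set by RFC. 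Since the bound is uniform in $x,d,t$, the set $\A_\eps$ is bounded.

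For UGATT I would exploit that $B_\eps(\A)\subseteq\A_\eps$ and that $\A_\eps$ is invariant, so that a trajectory which once enters $\A_\eps$ remains there and thereafter has zero distance to $\A_\eps$. Given $r,\eps'>0$, take $x$ with $\|x\|_{\A_\eps}\leq r$; boundedness of $\A_\eps$ together with Lemma~\ref{lem:Norms_Changing} gives $\|x\|_\A\leq R$ for $R:=r+\|\A_\eps\|+\|\A\|<\infty$. Applying uniform weak attractivity of $\A$ with source radius $R$ and target $\eps/2$ produces a uniform time $\hat\tau:=\tau(\eps/2,R)$ and some $t_0\leq\hat\tau$ with $\|\phi(t_0,x,d)\|_\A\leq\eps/2<\eps$, i.e. $\phi(t_0,x,d)\in B_\eps(\A)\subseteq\A_\eps$. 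Invariance of $\A_\eps$ (Remark~\ref{rem:Invariance_Aeps}) and the cocycle property then force $\phi(t,x,d)\in\A_\eps$, whence $\|\phi(t,x,d)\|_{\A_\eps}=0\leq\eps'$, for all $t\geq\hat\tau\geq t_0$. This is exactly the UGATT estimate, with $\tau(r,\eps'):=\hat\tau$.

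The main obstacle is the boundedness step, and within it the fact that the duration of an excursion outside $\{z:\|z\|_\A\leq\eps\}$ and the distance the trajectory travels during that excursion are a priori coupled. The device that breaks this coupling is to use RFC to bound the trajectory one unit of time into the excursion and only \emph{then} invoke weak attractivity, at a point strictly outside the ball, so that the guaranteed return occurs at a positive time which is small enough to contradict maximality of the last visit time $s^*$.
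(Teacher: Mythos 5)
Your proof is correct, and while its overall skeleton matches the paper's (boundedness of $\A_\eps$ via containment in a finite-time reachability set, invariance via Remark~\ref{rem:Invariance_Aeps}, and UGATT via ``enter $\A_\eps$ in uniform time, then stay by invariance''), the key technical step --- the uniform bound on excursions outside $B_\eps(\A)$ --- is handled by a genuinely different device. The paper first regularizes the attraction time $\tilde\tau(\eps,R)$ (assuming it monotone without loss of generality and averaging it by a double integral) to obtain a \emph{continuous} $\tau$, and uses this continuity to argue that trajectories starting in the closed $\eps$-ball return to it at \emph{positive} times within the horizon $\tau(\eps,\eps)$, whence $\A_\eps=\Rc^{\tau(\eps,\eps)}\big(B_\eps(\A)\big)$; this is delicate precisely because uniform weak attractivity is vacuous for points already inside the target ball. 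Your ``wait one unit of time'' trick resolves the same difficulty more directly: by maximality of the last-visit time $s^*$, the point $w=\phi(s^*+1,x,d)$ lies strictly outside the ball, RFC bounds $\|w\|_\A$ by a constant $r_2$ independent of the data, and weak attractivity applied from $w$ produces a visit at time $s^*+1+h>s^*$, contradicting maximality unless $t-s^*\leq 1+\tau(\eps,r_2)$. This buys you an argument that needs no monotonicity assumption, no integral regularization, and no limiting argument near the boundary of the ball (it uses RFC twice instead of once, a negligible cost); what the paper's construction buys in exchange is a continuous, monotone $\tau$ and the explicit identity $\A_\eps=\Rc^{\tau(\eps,\eps)}\big(B_\eps(\A)\big)$, which are reused later (e.g.\ the estimate \eqref{eq:UGATT_wrt_Aeps} feeds into Proposition~\ref{prop:from_ULIM_to_UGATT_strengthening}). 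A further small merit of your write-up: applying weak attractivity with target $\eps/2$ in the UGATT step cleanly lands the trajectory in the \emph{open} ball $B_\eps(\A)\subseteq\A_\eps$, avoiding an open-versus-closed ball mismatch that the paper glosses over.
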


\begin{proof}
Pick any $\eps>0$, any $R>0$ and fix them. Since $\A$ is a global uniform weak attractor of an RFC system $\Sigma$, there exists $\tilde\tau=\tilde\tau(\eps, R)>0$ so that 
\begin{eqnarray}
\hspace{-8mm} \|x\|_{\A} \leq R,\ d\in\Dc \srs \exists \bar{t}\leq \tilde\tau:\ \|\phi(\bar{t},x,d)\|_{\A} \leq \eps.
\label{eq:ULIM_UGATT_Section0}
\end{eqnarray}
Without loss of generality assume that $\tilde\tau$ is decreasing w.r.t. $\eps$ and increasing w.r.t. $R$.
Then $\tilde\tau$ is integrable on compact sets.
Define
\begin{eqnarray*}
\tau(\eps,R):= \frac{2}{\eps R}\int_R^{2R}\int_{\eps/2}^{\eps} \tilde\tau(\eps_1,R_1)d\eps_1dR_1.
\end{eqnarray*}
Clearly, $\tau$ is increasing w.r.t. $R$, decreasing w.r.t. $\eps$, continuous and 
$\tau(\eps,R)\geq\tilde\tau(\eps,R)$ for any $\eps,R>0$. Thus,
\begin{eqnarray}
\hspace{-8mm} \|x\|_{\A} \leq R,\ d\in\Dc \srs \exists \bar{t}\leq \tau:\ \|\phi(\bar{t},x,d)\|_{\A} \leq \eps.
\label{eq:ULIM_UGATT_Section1}
\end{eqnarray}

In particular, since $\tau$ is continuous, for any $\eps>0$ and $d\in\Dc$ it holds that
\begin{eqnarray}
\hspace{-8mm} \|x\|_{\A} \leq \eps  \srs \exists \tilde{t} \in(0,\tau(\eps, \eps)):\ \|\phi(\tilde{t},x,d)\|_{\A} \leq \eps.
\label{eq:ULIM_UGATT_Section2}
\end{eqnarray}
This means, that trajectories, emanating from $B_{\eps}(\A)$ return to this ball in time not larger than $\tau(\eps, \eps)$.
Hence $\A_\eps =\Rc^{\tau(\eps,\eps)}\big(B_\eps(\A)\big)$.
As $\Sigma$ is RFC, $\A_\eps$ is bounded for any $\eps>0$.

Now the cocycle property implies for any $t>\bar{t}$ that
\begin{eqnarray*}
\phi(t,x,d) = \phi\big(t-\bar{t},\phi(\bar{t},x,d),d(\cdot + \bar{t})\big).
\end{eqnarray*}
This ensures that 
\begin{eqnarray*}
\|x\|_{\A} \leq R ,\ d\in\Dc,\ t\geq \bar{t} \srs \phi(t,x,d)  \in \A_\eps,
\end{eqnarray*}
which means that for each $d\in\Dc$
\begin{eqnarray}
\hspace{-5mm} \|x\|_{\A} \leq R,\ t\geq \tau(R,\eps) \srs \|\phi(t,x,d)\|_{\A_\eps} = 0.
\label{eq:UGATT_wrt_Aeps}
\end{eqnarray}
Since $\A \subset \A_\eps$ for any $\eps>0$ it follows that 
$\|x\|_{\A} \leq \|x\|_{\A_\eps} + C$, where
$C:=\sup_{y\in\partial\A_\eps,\ z\in\partial\A,\ [y,z]\bigcap\A=\emptyset}\|y-z\|$ and
$[y,z]:=\{ty+(1-t)z:t\in(0,1)\}$.

Define $\tilde\tau(R,\eps):=\tau(R+C,\eps)$. We have:
\begin{eqnarray*}
\hspace{-9mm} \|x\|_{\A_\eps} \leq R ,\ d\in \Dc ,\ t\geq \tilde\tau(R,\eps) \srs \|\phi(t,x,d)\|_{\A_\eps} =0,
\end{eqnarray*}
which shows that $\A_\eps$ is UGATT.
\end{proof}

We call a function $h: \R_+^2 \to \R_+$ increasing, if $(r_1,R_1) \leq (r_2,R_2)$
implies that $h(r_1,R_1) \leq h(r_2,R_2)$, where we use the component-wise
partial order on $\R_+^2$. We call $h$ strictly increasing if $(r_1,R_1)
\leq (r_2,R_2)$ and $(r_1,R_1) \neq (r_2,R_2)$ imply $h(r_1,R_1) <
h(r_2,R_2)$.

We need the following characterization of RFC in terms of comparison-like functions, shown in \cite{MiW17a}:
\begin{lemma}
\label{lem:RFC_criterion}
Consider a  forward complete system $\Sigma=(X,\Dc,\phi)$. The following statements are equivalent:
\begin{enumerate}
	\item[(i)] $\Sigma$ is RFC.
	\item[(ii)] there exists a continuous, increasing function $\mu: \R_+^2 \to \R_+$, such that
 \begin{equation}
    \label{eq:8}
\hspace{-11mm}    x\in X,\ d\in \Dc,\ t \geq 0 \srs \| \phi(t,x,d) \| \leq \mu( \|x\|,t).
\end{equation}	
\end{enumerate}
\end{lemma}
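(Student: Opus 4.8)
The plan is to prove the two implications separately, the direction (ii) $\Rightarrow$ (i) being immediate from monotonicity, and the direction (i) $\Rightarrow$ (ii) requiring a regularization argument to produce a \emph{continuous} bound.

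For (ii) $\Rightarrow$ (i), suppose a continuous increasing $\mu$ as in \eqref{eq:8} exists. Fix $C>0$ and $\tau>0$. For any $x$ with $\|x\|\leq C$, any $d\in\Dc$ and any $t\in[0,\tau]$, monotonicity of $\mu$ gives $\|\phi(t,x,d)\| \leq \mu(\|x\|,t) \leq \mu(C,\tau)$. Taking the supremum over all such $x,d,t$ yields $\|\Rc^\tau(\overline{B_C})\| \leq \mu(C,\tau) < \infty$, which is exactly RFC.

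For (i) $\Rightarrow$ (ii), I would first extract a ``raw'' bound directly from the reachability sets. Define
\[
\tilde\mu(C,\tau) := \|\Rc^\tau(\overline{B_C})\| = \sup_{\|x\|\leq C,\ d\in\Dc,\ t\in[0,\tau]} \|\phi(t,x,d)\|.
\]
RFC guarantees $\tilde\mu(C,\tau)<\infty$ for all $C,\tau>0$. The inclusions $\overline{B_{C_1}}\subseteq\overline{B_{C_2}}$ for $C_1\leq C_2$ and $\Rc^{\tau_1}(\cdot)\subseteq\Rc^{\tau_2}(\cdot)$ for $\tau_1\leq\tau_2$ show that $\tilde\mu$ is increasing in each argument. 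Moreover, taking $C=\|x\|$ and the specific time $t$ gives $\phi(t,x,d)\in\Rc^t(\overline{B_{\|x\|}})$, so that $\|\phi(t,x,d)\|\leq\tilde\mu(\|x\|,t)$ for every $x,d,t$. Thus $\tilde\mu$ already satisfies the bound required in (ii); the sole missing property is continuity.

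The main work, and the only genuine obstacle, is to dominate the possibly discontinuous monotone function $\tilde\mu$ by a continuous increasing one. I would use the averaging device already employed in the proof of Proposition~\ref{prop:from_ULIM_to_UGATT}: set
\[
\mu(C,\tau) := \int_C^{C+1}\!\!\int_\tau^{\tau+1}\tilde\mu(s,u)\,du\,ds = \int_0^1\!\!\int_0^1\tilde\mu(C+a,\tau+b)\,da\,db.
\]
Since $\tilde\mu$ is monotone and finite, it is locally bounded and measurable, hence locally integrable, and writing $\mu$ as the inclusion--exclusion combination of the continuous (indeed locally Lipschitz) cumulative integral $F(C,\tau):=\int_0^C\!\int_0^\tau\tilde\mu$ at the four corners $(C{+}1,\tau{+}1)$, $(C{+}1,\tau)$, $(C,\tau{+}1)$, $(C,\tau)$ shows $\mu$ is continuous. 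Monotonicity of $\mu$ is inherited from that of $\tilde\mu$ through the shifted form of the integral, since increasing $C$ or $\tau$ raises the integrand pointwise. Finally $\tilde\mu(C+a,\tau+b)\geq\tilde\mu(C,\tau)$ for all $a,b\geq0$, so the average obeys $\mu(C,\tau)\geq\tilde\mu(C,\tau)$, whence $\|\phi(t,x,d)\|\leq\tilde\mu(\|x\|,t)\leq\mu(\|x\|,t)$, establishing \eqref{eq:8}. I expect the delicate point to be verifying continuity cleanly \emph{without} appealing to a.e.\ continuity of a two-variable monotone function, whose discontinuity set is awkward to control directly; the cumulative-integral formulation sidesteps this, reducing everything to local integrability of $\tilde\mu$, which holds automatically for finite monotone functions.
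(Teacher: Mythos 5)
Your proof is correct, but note that the paper itself contains no proof of this lemma: it is imported verbatim from \cite{MiW17a}, so there is no in-paper argument to compare against. Your two directions are both sound: (ii)~$\Rightarrow$~(i) is indeed immediate from monotonicity of $\mu$ (using that in a normed space $\overline{B_C}=\{x:\|x\|\le C\}$), and for (i)~$\Rightarrow$~(ii) your raw bound $\tilde\mu(C,\tau):=\|\Rc^\tau(\overline{B_C})\|$ together with the shifted-average regularization
$\mu(C,\tau)=\int_0^1\!\!\int_0^1\tilde\mu(C+a,\tau+b)\,da\,db$
delivers exactly the continuous increasing majorant required, with continuity obtained via the locally Lipschitz cumulative integral $F$ and the inclusion--exclusion identity. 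This is, incidentally, the same averaging device the paper itself uses for two-variable monotone functions in the proofs of Proposition~\ref{prop:from_ULIM_to_UGATT} and Lemma~\ref{lem:Weak_attr_plus_RFC_imply_pUGS}, so your route matches the toolkit of the paper and of the cited source. The one step you assert rather than prove is that a componentwise nondecreasing, finite function $\tilde\mu:\R_+^2\to\R_+$ is Lebesgue measurable (hence, being locally bounded, locally integrable): this is true, since each sublevel set $\{\tilde\mu<c\}$ is a down-set whose boundary is contained in the graph of a monotone one-variable function up to countably many vertical segments, hence Lebesgue null, but a sentence of justification would be appropriate, especially as this measurability claim is exactly what the paper also leaves implicit when it declares $\tilde\tau$ ``integrable on compact sets''.
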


The next lemma ensures that existence of a bounded uniform globally weakly attractive set for an RFC system implies its Lagrange stability. It is close in spirit to \cite[Proposition 7]{MiW17b}.
\begin{lemma}
\label{lem:Weak_attr_plus_RFC_imply_pUGS}
Let $\Sigma$ be an RFC system. If there is a bounded globally uniformly weakly attractive set $\A \subset X$, then $\Sigma$ is Lagrange stable.
\end{lemma}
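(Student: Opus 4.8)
The plan is to obtain Lagrange stability by combining three ingredients: the comparison-function form of RFC, a \emph{uniform ultimate bound} for trajectories issued from the fixed ball $\overline{B_1(\A)}$, and a transient estimate valid until a trajectory first reaches that ball. First I would measure everything relative to $\A$. By Proposition~\ref{prop:RFC_equivalence_wrt_A}, RFC gives $\sup_{\|x\|_{\A}\le C,\ d\in\Dc,\ t\in[0,\tau]}\|\phi(t,x,d)\|_{\A}<\infty$ for all $C,\tau>0$ (equivalently, by Lemma~\ref{lem:RFC_criterion} together with Lemma~\ref{lem:Norms_Changing}, there is a continuous increasing $\mu$ with $\|\phi(t,x,d)\|_{\A}\le\mu(\|x\|_{\A},t)$). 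I would also assume without loss of generality that the attractivity time $\tau(\eps,r)$ from \eqref{eq:Uniform_weak2} is nondecreasing in $r$ and nonincreasing in $\eps$.

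The core step is to show that the reachability set of $\overline{B_1(\A)}$ is bounded. Concretely, set
\[
M:=\sup_{\|y\|_{\A}\le 2,\ d\in\Dc,\ s\in[0,\tau(1,2)]}\|\phi(s,y,d)\|_{\A},
\]
which is finite by RFC and satisfies $M\ge 2$. I claim that $\|x\|_{\A}\le 1$ forces $\|\phi(t,x,d)\|_{\A}\le M$ for all $t\ge 0$ and $d\in\Dc$. Arguing by contradiction, suppose $\|\phi(t_0,x,d)\|_{\A}>M$ for some $t_0$, and let $s_2:=\sup\{s\le t_0:\ \|\phi(s,x,d)\|_{\A}\le 2\}$. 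Continuity of $s\mapsto\|\phi(s,x,d)\|_{\A}$ (axiom~\ref{axiom:Continuity} and $1$-Lipschitzness of $\|\cdot\|_{\A}$) makes this supremum attained, with $\|\phi(s_2,x,d)\|_{\A}=2$, while $\|\phi(s,x,d)\|_{\A}>2$ for every $s\in(s_2,t_0]$. Applying uniform weak attractivity \eqref{eq:Uniform_weak2} with $\eps=1$, $r=2$ to $x_2:=\phi(s_2,x,d)$ and the shifted disturbance $d(\cdot+s_2)$ (which lies in $\Dc$ by shift invariance) yields some $t'\le\tau(1,2)$ with $\|\phi(t',x_2,d(\cdot+s_2))\|_{\A}\le 1$, i.e., by the cocycle property, $\|\phi(s_2+t',x,d)\|_{\A}\le 1$.

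The crucial point, which I expect to be the main obstacle, is that this return time $t'$ must be \emph{strictly} positive; this is exactly why I target radius $1$ while starting from the level $\|x_2\|_{\A}=2$. Since $2>1$, the starting state $x_2$ lies outside the target ball, so the degenerate value $t'=0$ is excluded and the return is genuine. Because $\|\phi(s_2+t',x,d)\|_{\A}\le 1<2$ whereas the trajectory exceeds $2$ on all of $(s_2,t_0]$, we must have $s_2+t'>t_0$, hence $t_0-s_2<\tau(1,2)$; RFC applied from $x_2$ over the time $t_0-s_2\in[0,\tau(1,2)]$ then gives $\|\phi(t_0,x,d)\|_{\A}\le M$, contradicting the choice of $t_0$. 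Had I instead targeted radius $1$ from level $1$, the admissible solution $t'=0$ would leave the excursion uncontrolled and the argument would collapse; the two-level device is precisely what rules this out.

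Finally I would assemble the estimate for arbitrary initial states. Given $x$ with $\|x\|_{\A}=r$, uniform weak attractivity (with $\eps=1$) provides $t_1\le\tau(1,r)$ such that $\|\phi(t_1,x,d)\|_{\A}\le 1$. For $t\le t_1$ the transient is controlled by RFC via $\|\phi(t,x,d)\|_{\A}\le\omega(r):=\sup_{\|y\|_{\A}\le r,\ s\le\tau(1,r),\ d\in\Dc}\|\phi(s,y,d)\|_{\A}<\infty$, whereas for $t> t_1$ the cocycle property combined with the ultimate bound of the previous step gives $\|\phi(t,x,d)\|_{\A}\le M$. Hence $\|\phi(t,x,d)\|_{\A}\le\omega(\|x\|_{\A})+M$ for all $t$ and $d$. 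Since $\omega$ is finite-valued and nondecreasing, it can be majorized as $\omega(r)\le\sigma(r)+\omega(0)$ for some $\sigma\in\Kinf$; setting $c:=\omega(0)+M$ then yields $\|\phi(t,x,d)\|_{\A}\le\sigma(\|x\|_{\A})+c$, which is exactly Lagrange stability.
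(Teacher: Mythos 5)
Your proof is correct, and it takes a genuinely different route from the paper's. The paper runs a single-scale argument: for each $r>0$ it combines weak attractivity with target $r/2$ and horizon $\tau(r)$, the RFC bound of Lemma~\ref{lem:RFC_criterion}, and a last-exit-time contradiction from $B_r(\A)$, producing in one stroke an all-time bound $\tilde\sigma(r)=\mu(r,\bar\tau(r))$; the mollification $\bar\tau(r)=\frac{1}{r}\int_r^{2r}\tau(s)\,ds$ is introduced precisely so that $\tilde\sigma$ is continuous and $\sigma:=\tilde\sigma-\tilde\sigma(0)$ can serve as the $\Kinf$ gain. You instead decompose the problem into (a) a uniform bound $M$ on the reachable set of $\overline{B_1(\A)}$ --- essentially uniform ultimate boundedness, i.e.\ item (iv) of Theorem~\ref{thm:RFC_weak_attr_imply_pUGAS} --- obtained by an excursion argument between the two levels $2$ and $1$, and (b) a transient RFC bound $\omega(r)$ valid until the first entry into $B_1(\A)$, glued together by the cocycle property and shift invariance. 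Your two-level device is a genuine gain in rigor at one point: it forces the return time $t'$ to be strictly positive, whereas in the paper's case $t-t_m>\tau(r)$ the witness $t^*$ from \eqref{eq:Uniform_weak2} could a priori be $0$, in which case no contradiction with the maximality of $t_m$ arises; the paper's argument is salvageable by noting that $\|\phi(t_m,x,d)\|_\A=r$ exactly (by continuity and maximality of $t_m$), so $t^*=0$ would give $r\le r/2$, but this is left implicit there. What the paper's route buys in exchange is that the gain function comes out continuous by construction, so no majorization lemma is needed at the end.

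One small repair is needed in your last step: the claim that a finite-valued nondecreasing $\omega$ satisfies $\omega(r)\le\sigma(r)+\omega(0)$ with $\sigma\in\Kinf$ is false if $\omega$ jumps at $0$ --- and it can jump, since trajectories starting arbitrarily close to $\A$ may wander far before returning when $\A$ is not robustly invariant (cf.\ Example~\ref{exam:UGATT_Robustness_Invariant_Sets}). The fix is immediate: majorize instead by $\sigma(r)+\omega(1)$, which is possible because $r\mapsto\max\{\omega(r)-\omega(1),0\}$ is nondecreasing and vanishes on $[0,1]$, and set $c:=\omega(1)+M$; Lagrange stability tolerates any additive constant, so nothing else in your argument changes.
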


%

\begin{proof}
Pick any $r>0$ and set $\eps:=\frac{r}{2}$.
Since there is a set $\A\subset X$ which is uniformly globally weakly attractive, there exists a $\tau=\tau(r)$ so that 
\begin{eqnarray}
\hspace{-9mm}\|x\|_{\A} \leq r,\ d\in\Dc \srs \exists t\leq \tau(r):\ \|\phi(t,x,d)\|_\A \leq \eps.
\label{eq:ULIM_pGS_theorem1_v2}
\end{eqnarray}
Without loss of generality we can assume that $\tau$ is increasing in $r$, in particular, it is locally integrable.
Defining $\bar\tau(r):=\frac{1}{r}\int_r^{2r}\tau(s)ds$ we see that $\bar\tau(r) \geq \tau(r)$ and $\bar\tau$ is continuous.

Since $\Sigma$ is RFC, by Lemma~\ref{lem:RFC_criterion} there exists a continuous, increasing function 
$\mu: \R_+^2 \to \R_+$, such that for all $x\in X, d\in \Dc$ and all $t \geq 0$ we have \eqref{eq:8}.
In particular,
\begin{eqnarray}
\hspace{-9mm}\|x\|_\A {\leq} r,\, d{\in}\Dc,\, t{\in}[0,\tau(r)] \ \Rightarrow\  \|\phi(t{,}x{,}d)\|_\A \leq \mu(r,\bar\tau(r)).
\label{eq:ULIM_pGS_theorem2_v2}
\end{eqnarray}
Since $\mu$ and $\bar\tau$ are both continuous and increasing, $\tilde\sigma:r\mapsto \mu(r,\bar\tau(r))$ is again continuous and increasing. Also from \eqref{eq:ULIM_pGS_theorem1_v2} and \eqref{eq:ULIM_pGS_theorem2_v2} it is clear that $\tilde\sigma(r) \geq \frac{r}{2}$ for any $r>0$.

Seeking a contradiction, assume that there exist $x\in B_r(\A)$, $d\in\Dc$ and $t\geq0$ so that $\|\phi(t,x,d)\|_\A  > \tilde
\sigma(r)$. Define $t_m:=\sup\{s \in[0,t]: \|\phi(s,x,d)\|_\A \leq r\}$ which is well-defined, since ($\Sigma$\ref{axiom:Identity}) implies that $\|\phi(0,x,d)\|_\A = \|x\|_\A \leq r$.

In view of the cocycle property, it holds that 
\[
\phi(t,x,d) = \phi\big(t-t_m,\phi(t_m,x,d),d(\cdot + t_m)\big),
\]
and $d(\cdot + t_m) \in\Dc$ due to the axiom of shift invariance.

Assume that $t-t_m \leq\tau(r)$. Since $\|\phi(t_m,x,d)\|_\A\leq r$,
\eqref{eq:ULIM_pGS_theorem2_v2} implies that $\|\phi(s,x,d)\|_\A \leq \tilde \sigma(r)$ for all $s \in [t_m,t]$.
Otherwise, if $t-t_m >\tau(r)$, we may apply
\eqref{eq:ULIM_pGS_theorem1_v2} and there exists $t^*<\tau(r)$, so that 
\[
\|\phi\big(t^*,\phi(t_m,x,d),d(\cdot + t_m)\big)\|_\A =
\|\phi(t^*+t_m,x,d)\|_\A \leq \eps = \frac{r}{2}.
\] 
This contradicts the definition of $t_m$, since $t_m+t^*<t$.
Hence
\begin{eqnarray}
\hspace{-9mm}\|x\|_\A \leq r,\ d\in\Dc,\ t\geq 0 \srs  \|\phi(t,x,d)\|_\A \leq \tilde\sigma(r).
\label{eq:ULIM_pGS_theorem3_v2}
\end{eqnarray}
Denote $\sigma(r):=\tilde\sigma(r) - \tilde\sigma(0)$, for any $r\geq 0$. Clearly, $\sigma\in\Kinf$.
Then from \eqref{eq:ULIM_pGS_theorem3_v2} we have for all $x\in X,\ d\in\Dc,\
t\geq 0$ that
\begin{eqnarray}
 \|\phi(t,x,d)\|_\A \leq \sigma(\|x\|_\A) + \tilde\sigma(0),
\label{eq:ULIM_pGS_theorem4_v2}
\end{eqnarray}
which shows Lagrange stability of $\Sigma$.
\end{proof}

The following lemma is routine:
\begin{lemma}
\label{lem:UGATT_plus_LagrangeSt_imply_pUGAS}
Let $\Sigma$ be a system and let $\A \subset X$ be a bounded invariant set of $\Sigma$. Then:
\begin{enumerate}[label=(\roman*)]
   \item If $\A$ is UGATT and Lagrange stable, then $\Sigma$ is pUGAS.
   \item If $\A$ is UGATT and globally stable, then $\Sigma$ is UGAS.
\end{enumerate}
\end{lemma}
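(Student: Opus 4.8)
The plan is to bound every trajectory by a single scalar function of the initial distance $\|x\|_{\A}$ and of time, and then to dominate that function by a genuine $\KL$ function. To this end I would first introduce the tail supremum
\[
\omega(r,t) := \sup\{\,\|\phi(s,x,d)\|_{\A} \ :\ \|x\|_{\A}\leq r,\ d\in\Dc,\ s\geq t\,\},\qquad r,t\geq 0 .
\]
Taking $s=t$ and $r=\|x\|_{\A}$ in the supremum shows at once that $\|\phi(t,x,d)\|_{\A}\leq \omega(\|x\|_{\A},t)$ for all $x,d,t$, so it suffices to dominate $\omega$. Straight from the definition, $\omega$ is nondecreasing in $r$ (the admissible set of initial states grows) and nonincreasing in $t$ (the supremum is taken over the shrinking set $\{s\geq t\}$). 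Lagrange stability gives the uniform bound $\omega(r,t)\leq \sigma(r)+c$ (respectively $\omega(r,t)\leq\sigma(r)$ in the globally stable case, where $c=0$), and UGATT gives decay: for every $\eps>0$ one has $\omega(r,t)\leq\eps$ as soon as $t\geq\tau(r,\eps)$, whence $\omega(r,t)\to 0$ as $t\to\infty$ for each fixed $r$.

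For part (ii) the function $\omega$ already has all the features of a $\KL$ bound: it is monotone in each argument in the right direction, decays to zero in $t$, and satisfies $\omega(0,t)\leq\sigma(0)=0$. I would then invoke the standard $\KL$-domination lemma: any map $\R_+^2\to\R_+$ that is nondecreasing in the first variable, nonincreasing in the second, vanishes at $r=0$, and tends to $0$ as $t\to\infty$ for each fixed $r$ is majorized by some $\beta\in\KL$. Combining $\|\phi(t,x,d)\|_{\A}\leq\omega(\|x\|_{\A},t)\leq\beta(\|x\|_{\A},t)$ then yields UGAS of $\A$.

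For part (i) the only change is the additive constant $c$ from Lagrange stability, which prevents $\omega(0,\cdot)$ from vanishing. I would therefore pass to $\tilde\omega(r,t):=\max\{\omega(r,t)-c,\,0\}$. This truncated function inherits the two monotonicities and the decay $\tilde\omega(r,t)\to 0$, is bounded by $\sigma(r)$, and now satisfies $\tilde\omega(0,t)=0$ because $\omega(0,t)\leq\sigma(0)+c=c$. Applying the same $\KL$-domination lemma produces $\beta\in\KL$ with $\tilde\omega\leq\beta$, and hence $\|\phi(t,x,d)\|_{\A}\leq\omega(\|x\|_{\A},t)\leq\beta(\|x\|_{\A},t)+c$, which is exactly pUGAS with offset $c$.

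The only nontrivial ingredient is the $\KL$-domination lemma used in both parts; everything else is bookkeeping with the definition of $\omega$. In a self-contained write-up I would either cite this classical fact (it is in the spirit of the comparison-function constructions already used in Proposition~\ref{prop:from_ULIM_to_UGATT} and Lemma~\ref{lem:Weak_attr_plus_RFC_imply_pUGAS}) or reconstruct $\beta$ explicitly by smoothing $\omega$ in both variables through the averaging integrals employed earlier and then inverting the resulting continuous, strictly monotone settling-time function. The \emph{main obstacle} is precisely this construction: one must ensure joint continuity so that $\beta$ is simultaneously of class $\K$ in $r$ and of class $\LL$ in $t$, whereas $\omega$ itself is only monotone and need not be continuous.
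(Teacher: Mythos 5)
Your proposal is correct in substance and, at its core, follows the same route as the paper: the paper's proof is precisely an inline construction of the dominating $\KL$ function (for each fixed $\delta$ it builds a staircase of UGATT times $\tau_n=\tau(\eps_n,\delta)$ with levels $\eps_n=2^{-n}\sigma(\delta)$, interpolates in $t$ to get an $\LL$ function $\omega(\delta,\cdot)$, and then takes $\sup_{0\le s\le r}\omega(s,t)$ to restore monotonicity in the first argument), while you package the same information into the tail supremum $\omega(r,t)$ and then appeal to a domination lemma. The treatment of the constant also differs only cosmetically: the paper absorbs $c$ into the offset of the estimate $\beta(\cdot,\cdot)+c$, whereas you truncate via $\tilde\omega=\max\{\omega-c,0\}$; both are fine.

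One genuine flaw to fix: the ``standard $\KL$-domination lemma'' is false as you state it. Monotonicity in each argument, $\omega(0,\cdot)\equiv 0$, and $\omega(r,t)\to 0$ as $t\to\infty$ for each fixed $r$ do \emph{not} suffice: take $\omega(r,t)=e^{-t}$ for $r>0$ and $\omega(0,t)=0$. Any majorant $\beta$ with $\beta(\cdot,t)$ continuous and $\beta(0,t)=0$ would need $\beta(r,t)\ge e^{-t}$ for all $r>0$, contradicting $\lim_{r\to 0^+}\beta(r,t)=0$. The missing hypothesis is uniform smallness near $r=0$, i.e.\ a bound $\omega(r,t)\le\sigma(r)$ with $\sigma\in\Kinf$ (equivalently, $\omega(r,0)\to 0$ as $r\to 0^+$). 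Your functions do satisfy this --- global stability gives $\omega(r,t)\le\sigma(r)$ in case (ii), and Lagrange stability gives $\tilde\omega(r,t)\le\sigma(r)$ in case (i) --- and you even record these bounds, but they must be part of the lemma's hypotheses, since they are exactly what makes the domination possible. With the lemma restated in that (classical) form, your argument is complete; without it, the citation is to a false statement and the proof, read literally, has a hole at its only nontrivial step.
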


\begin{proof}
Let us start with (i).
Since $\A$ is Lagrange stable, there is a $\sigma\in\Kinf$ and $c>0$:
\begin{eqnarray}
\hspace{-9mm}t\geq 0,\, \|x\|_\A \leq \delta,\, d \in \Dc \srs \|\phi(t,x,d)\|_\A \leq \sigma(\delta) + c.
\label{eq:TmpEstimate}
\end{eqnarray}
Define $\eps_n:=\frac{1}{2^n} \sigma(\delta)$, for $n \in \N$. 
UGATT of $\A$ ensures that there is a sequence of times $\tau_n:=\tau(\eps_n,\delta)$ which we assume without loss of generality to be strictly increasing, such that 
\[
t \geq \tau_n,\ \|x\|_\A \leq \delta,\ d \in \Dc \srs \|\phi(t,x,d)\|_\A \leq \eps_n.
\]
Define $\omega(\delta,\tau_n):=\eps_{n-1}$, for $n \in \N$, $n \neq 0$ and extend the function $\omega(\delta,\cdot)$ for $t \in \R_+ \backslash \{\tau_n: n \in \N\}$ so that $\omega(\delta,\cdot) \in \LL$. 
Note that for any $n\in\N$ and for all $t \in (\tau_n,\tau_{n+1})$ it holds that $\|\phi(t,x,d)\|_\A \leq \eps_n < \omega(\delta,t)$.
Doing this for all $\delta \in \R_+$ we obtain function $\omega$.

Now choose $\tilde\beta(r,t)=\sup_{0 \leq s \leq r}\omega(s,t) \geq \omega(r,t)$. Obviously, $\tilde\beta$ is non-decreasing w.r.t. the first argument and decreasing w.r.t. the second one. It can be estimated from above by $\beta \in\KL$ and 
\eqref{eq:practical_UGAS_wrt_A_estimate} is satisfied with such a $\beta$. 

Let us show (ii). Since $\A$ is globally stable, \eqref{eq:TmpEstimate} holds with $c=0$. The same argument as above shows now UGAS of $\A$.
\end{proof}

Finally, we can prove Theorem~\ref{thm:RFC_weak_attr_imply_pUGAS}.
\begin{proof} (of Theorem~\ref{thm:RFC_weak_attr_imply_pUGAS}.)
\ref{enum:pUGAS_item1} $\Rightarrow$ \ref{enum:pUGAS_item11}. 
Assume, that $\Sigma$ is a pUGAS system.
Then there is a $\beta\in\KL$ and $c>0$ so that for all $x\in X$, $t\geq 0$ and $d\in\Dc$ we have
\begin{equation}
\label{eq:practical_UGAS_Xnorm}
\hspace{-1mm}\|\phi(t,x,d)\| \leq \beta(\|x\|,t) +c.
\end{equation}

Then $\overline{B_c(0)}$ is uniformly globally attractive (but may be not invariant).
According to Proposition~\ref{prop:from_ULIM_to_UGATT} there is a bounded invariant UGATT set $\A$ (which can be chosen e.g. as $(B_c(0))_\eps$ for any $\eps>0$).

At the same time \eqref{eq:practical_UGAS_Xnorm} implies Lagrange stability since $\beta(\|x\|,t)\leq \beta(\|x\|,0)$ for any $x \in X,\ t\geq0$.

\ref{enum:pUGAS_item11} $\Rightarrow$ \ref{enum:pUGAS_item2}. Clear.

\ref{enum:pUGAS_item2} $\Rightarrow$ \ref{enum:pUGAS_item21}. Follows by Remark~\ref{rem:UGATT_implies_Unif_Boundedness}.

\ref{enum:pUGAS_item21} $\Rightarrow$ \ref{enum:pUGAS_item3}. Evident, since the set $B_K$ in Definition~\ref{def:Unif_Ultimate_Boundedness}
 is uniformly globally weakly attractive.

\ref{enum:pUGAS_item3} $\Rightarrow$ \ref{enum:pUGAS_item1}. 
Let $\Sigma$ be an RFC system and there is a bounded uniformly globally weakly attractive set for $\Sigma$.
By Proposition~\ref{prop:from_ULIM_to_UGATT} there is a bounded invariant UGATT set for $\Sigma$.
Lemma~\ref{lem:Weak_attr_plus_RFC_imply_pUGS} implies that $\Sigma$ is Lagrange stable and finally 
Lemma~\ref{lem:UGATT_plus_LagrangeSt_imply_pUGAS} proves that $\Sigma$ is pUGAS.
\end{proof}

\begin{remark}
\label{rem:Forward_completeness_is_not_enough}
Theorem~\ref{thm:RFC_weak_attr_imply_pUGAS} does not hold, if we require instead of RFC merely forward completeness, see an example in \cite{MiW17a}.
\end{remark}

\subsection{Existence of a non-coercive Lyapunov function implies pUGAS}

As an application of Theorem~\ref{thm:RFC_weak_attr_imply_pUGAS} we obtain the following non-coercive Lyapunov sufficient condition for pUGAS of RFC systems.

\begin{corollary}
Let $\Sigma=(X,\Dc,\phi)$ be an RFC system. 
If there is a bounded set $\A\subset X$ so that $\Sigma$ possesses a non-coercive Lyapunov function w.r.t. $\A$, then $\Sigma$ is practically UGAS.
\end{corollary}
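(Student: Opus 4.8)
The plan is to deduce the Corollary from Theorem~\ref{thm:RFC_weak_attr_imply_pUGAS}: since $\Sigma$ is RFC and $\A$ is bounded, it suffices to verify that $\A$ is uniformly globally weakly attractive, for then the implication \ref{enum:pUGAS_item3} $\Rightarrow$ \ref{enum:pUGAS_item1} delivers pUGAS at once. Thus the whole task reduces to extracting a uniform reaching time $\tau(\eps,r)$ from the dissipation inequality of the non-coercive Lyapunov function $V$.

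To this end I fix $\eps>0$, $r>0$, an initial state $x$ with $\|x\|_\A\le r$, and a disturbance $d\in\Dc$, and I track the scalar map $t\mapsto V(\phi(t,x,d))$. It is continuous by ($\Sigma$\ref{axiom:Continuity}) and starts below $\psi_2(r)$ by \eqref{eq:1}. The crucial point is that while $\|\phi(t,x,d)\|_\A>\eps$, monotonicity of $\alpha\in\K$ and the dissipation inequality \eqref{DissipationIneq_UGAS_With_Disturbances} give a uniform decay rate: the lower-right Dini derivative of $t\mapsto V(\phi(t,x,d))$ is at most $-\alpha(\eps)<0$. Establishing this along the whole trajectory, and not merely at $t=0$, is where the cocycle property ($\Sigma$\ref{axiom:Cocycle}) and shift invariance enter, since they let me identify the Dini derivative of $V$ at the point $\phi(t,x,d)$ under the shifted disturbance $d(\cdot+t)$ with the Dini derivative of $t\mapsto V(\phi(t,x,d))$ at that later time.

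I then set $\tau(\eps,r):=\psi_2(r)/\alpha(\eps)$ and argue by contradiction. If $\|\phi(t,x,d)\|_\A>\eps$ were to hold for all $t\in[0,\tau]$, a comparison principle for continuous functions with Dini derivative bounded above by $-\alpha(\eps)$ would yield $V(\phi(\tau,x,d))\le\psi_2(r)-\alpha(\eps)\tau=0$; but $\|\phi(\tau,x,d)\|_\A>\eps>0$ places $\phi(\tau,x,d)$ in $X\setminus\A$, so \eqref{eq:1} forces $V(\phi(\tau,x,d))>0$, a contradiction. Hence some $t\le\tau(\eps,r)$ satisfies $\|\phi(t,x,d)\|_\A\le\eps$, which is exactly \eqref{eq:Uniform_weak2}; note that $\tau$ depends only on $\eps$ and $r$, not on the chosen $x$ or $d$, as the definition of uniform weak attractivity demands. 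Combined with RFC this is statement \ref{enum:pUGAS_item3} of Theorem~\ref{thm:RFC_weak_attr_imply_pUGAS}, and pUGAS follows.

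I expect the main obstacle to be the rigorous integration of the Dini-derivative bound. One must invoke (or prove) a comparison lemma stating that a continuous function on $[0,\tau]$ whose lower-right Dini derivative is $\le-\alpha(\eps)$ throughout decreases by at least $\alpha(\eps)\tau$, and one must handle the ``while $\|\phi\|_\A>\eps$'' caveat with care — cleanly done by running the estimate on the maximal initial subinterval on which $\|\phi(\cdot,x,d)\|_\A>\eps$ and using continuity of $t\mapsto\|\phi(t,x,d)\|_\A$ to pin down the first instant at which $\|\phi(t,x,d)\|_\A\le\eps$.
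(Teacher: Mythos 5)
Your proposal is correct and is essentially the paper's own proof: the paper likewise reduces the Corollary to the implication \ref{enum:pUGAS_item3}~$\Rightarrow$~\ref{enum:pUGAS_item1} of Theorem~\ref{thm:RFC_weak_attr_imply_pUGAS} by first showing (Proposition~\ref{prop:noncoeLT_plus_FC_implies_ULIM_v2}) that a non-coercive Lyapunov function makes the bounded set $\A$ uniformly globally weakly attractive, using the same contradiction argument with a uniform time $\tau(\eps,r)=\bigl(\psi_2(r)+1\bigr)/\alpha(\eps)$. The only cosmetic difference is how the contradiction is closed: the paper integrates the dissipation inequality to obtain $\int_0^{\tau}\alpha(\|\phi(s,x,d)\|_{\A})\,ds\le V(x)\le\psi_2(r)$ and contradicts this with $\tau\alpha(\eps)=\psi_2(r)+1$ (the extra $+1$ avoiding any appeal to strict positivity of $V$), whereas you drive $V$ down to $0$ and contradict $V>0$ on $X\setminus\A$ from \eqref{eq:1} --- both variants resting on the same integration/comparison step for Dini derivatives that you correctly flag as the technical crux.
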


\begin{proof}
Follows by Theorem~\ref{thm:RFC_weak_attr_imply_pUGAS} and Proposition~\ref{prop:noncoeLT_plus_FC_implies_ULIM_v2} which is proved next.
\end{proof}


The following relation between non-coercive Lyapunov functions and uniform weak attractivity is valid:
\begin{proposition}
    \label{prop:noncoeLT_plus_FC_implies_ULIM_v2}
    Consider a forward complete system $\Sigma=(X,\Dc,\phi)$ and let $\A \subset X$ be bounded. If there
    exists a non-coercive Lyapunov function $V$ for $\A$, then $\A$ is uniformly globally weakly attractive.
\end{proposition}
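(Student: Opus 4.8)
The plan is to run the classical Lyapunov argument in its most elementary form: the dissipation inequality forces $V$ to decrease at a rate bounded away from zero as long as the trajectory stays outside a fixed neighborhood of $\A$, while $V$ is bounded above on the initial ball and nonnegative everywhere, so the trajectory cannot remain outside that neighborhood longer than a time determined solely by the ball and the neighborhood. Concretely, I would fix $\eps>0$ and $r>0$ and propose the candidate
\[
\tau(\eps,r):=\frac{\psi_2(r)}{\alpha(\eps)},
\]
which is well-defined and positive because $\alpha\in\K$ yields $\alpha(\eps)>0$ and $\psi_2\in\Kinf$ yields $\psi_2(r)<\infty$. I then claim this $\tau$ witnesses uniform global weak attractivity in the sense of \eqref{eq:Uniform_weak2}; note only forward completeness (for $\phi(t,x,d)$ to exist on all of $[0,\tau]$) and continuity are needed, never RFC.

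Next I would fix $x$ with $\|x\|_{\A}\leq r$ and $d\in\Dc$, and study the scalar map $y(t):=V(\phi(t,x,d))$, which is continuous by continuity of $V$ and axiom ($\Sigma$\ref{axiom:Continuity}). Arguing by contradiction, suppose $\|\phi(t,x,d)\|_{\A}>\eps$ for all $t\in[0,\tau]$. The key step is to transfer the pointwise dissipation inequality \eqref{DissipationIneq_UGAS_With_Disturbances} onto $y$: using the cocycle property ($\Sigma$\ref{axiom:Cocycle}) to write $\phi(t+s,x,d)=\phi(s,\phi(t,x,d),d(t+\cdot))$ together with shift invariance of $\Dc$, the lower-right Dini derivative of $y$ satisfies, at each $t\in[0,\tau)$,
\[
\mathop{\underline{\lim}}\limits_{s\to+0}\frac{y(t+s)-y(t)}{s}=\dot V_{d(t+\cdot)}\big(\phi(t,x,d)\big)\leq-\alpha\big(\|\phi(t,x,d)\|_{\A}\big)\leq-\alpha(\eps),
\]
where the last inequality uses monotonicity of $\alpha$ and the standing hypothesis $\|\phi(t,x,d)\|_{\A}>\eps$; this is legitimate at every such $t$ because $d(t+\cdot)\in\Dc$ and $\phi(t,x,d)\in X\setminus\A$.

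From here I would invoke the standard comparison principle for Dini derivatives: a continuous scalar function whose lower-right Dini derivative is bounded above by the constant $-\alpha(\eps)$ on $[0,\tau]$ obeys $y(\tau)\leq y(0)-\alpha(\eps)\tau$. Combining this with $y(0)=V(x)\leq\psi_2(\|x\|_{\A})\leq\psi_2(r)$ from \eqref{eq:1} and the definition of $\tau$ gives $y(\tau)\leq\psi_2(r)-\alpha(\eps)\tau=0$. Since $V\geq0$, this forces $V(\phi(\tau,x,d))=0$, and the strict positivity $0<V(z)$ for $z\in X\setminus\A$ in \eqref{eq:1} then yields $\phi(\tau,x,d)\in\A$, i.e. $\|\phi(\tau,x,d)\|_{\A}=0\leq\eps$, contradicting that the trajectory stayed outside $B_\eps(\A)$. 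Hence some $t\leq\tau$ with $\|\phi(t,x,d)\|_{\A}\leq\eps$ must exist, and as $\tau$ depends only on $\eps$ and $r$, this is precisely uniform global weak attractivity.

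The main obstacle I anticipate is making the comparison step fully rigorous, since $\dot V_d$ is only a one-sided lower Dini derivative and $t\mapsto V(\phi(t,x,d))$ need not be differentiable or even monotone a priori. The cleanest route is to reduce the required inequality to the classical fact that a continuous function with nonnegative upper-right Dini derivative on an interval is non-decreasing: applying that fact to $t\mapsto-\big(y(t)+\alpha(\eps)t\big)$, whose upper-right Dini derivative equals $-\big(\mathop{\underline{\lim}}_{s\to+0}\tfrac{y(t+s)-y(t)}{s}+\alpha(\eps)\big)\geq0$, delivers exactly $y(\tau)\leq y(0)-\alpha(\eps)\tau$. Everything else is bookkeeping with the cocycle and shift-invariance axioms and the monotonicity of the comparison functions $\psi_2$ and $\alpha$.
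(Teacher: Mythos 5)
Your proof is correct and takes essentially the same route as the paper's: both argue by contradiction that a trajectory staying outside the $\eps$-neighborhood of $\A$ forces $V$ to decay at rate at least $\alpha(\eps)$ from an initial value at most $\psi_2(r)$, so the dwell time outside is uniformly bounded by roughly $\psi_2(r)/\alpha(\eps)$. The only cosmetic difference is that the paper first integrates the dissipation inequality and uses the margin $\tau(\eps,r)=\bigl(\psi_2(r)+1\bigr)/\alpha(\eps)$ to get a strict numerical contradiction, whereas you take $\tau(\eps,r)=\psi_2(r)/\alpha(\eps)$ and close the argument via the pointwise Dini-derivative comparison lemma together with the strict positivity of $V$ on $X\setminus\A$ --- both rest on the same underlying comparison principle.
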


This proposition has been shown in \cite{MiW17a} for $\A=\{0\}$. The proof is completely analogous for general bounded $\A$.
We include it here merely for the sake of completeness.

\begin{proof}
Let $V$ be a non-coercive Lyapunov function and let $\alpha \in \mathcal{K}$ be such that the decay estimate \eqref{DissipationIneq_UGAS_With_Disturbances} is true.
Along any trajectory $\phi$ of a forward complete system $\Sigma$ we have the inequality
\begin{equation}
\dot V_{d(t+\cdot)}(\phi(t,x,d)) \leq -\alpha(\|\phi(t,x,d)\|_{\A}), \quad
\forall t\geq 0.
\label{eq:JustEq1}
\end{equation}
Since $\alpha \in\K$, it follows that
\begin{equation}
V(\phi(t,x,d)) - V(x) \leq -\int_0^t \alpha(\|\phi(s,x,d)\|_{\A})ds,
\label{eq:JustEq3}
\end{equation}
which implies that for all $t\geq 0$ we have
\begin{eqnarray}
\int_0^t \alpha(\|\phi(s,x,d)\|_{\A})ds  
\leq
V(x)  \leq \psi_2(\|x\|_{\A}).
\label{eq:Vintbound_2_v2}
\end{eqnarray}

%
Fix $\eps>0$ and $r>0$ and define $\tau(\eps,r):=\frac{\psi_2(r) +
  1}{\alpha(\eps)}$. We claim that this choice of $\tau = \tau(\eps,r)$
yields an appropriate time to conclude uniform weak attractivity of
$\A$. Assume to the contrary that there exist $x\in B_r(\A)$ and $d\in \Dc$ with the property
$\|\phi(t,x,d)\|_{\A} \geq \eps$ for all $t\in[0,\tau(\eps,r)]$. In view of \eqref{eq:Vintbound_2_v2} and since $\alpha\in\K$, we obtain
\[
\psi_2(r) + 1 = \tau(\eps,r) \alpha(\eps) \leq  \int_0^{\tau(\eps,r)}\hspace{-4mm} \alpha(\|\phi(s,x,d)\|_{\A})ds \leq \psi_2(r),
\]
a contradiction.
\end{proof}

\subsection{Strengthening of Proposition~\ref{prop:from_ULIM_to_UGATT}}

Proposition~\ref{prop:from_ULIM_to_UGATT} shows the existence of bounded UGATT sets for RFC systems possessing bounded uniform global attractors. A mere existence result was satisfactory for characterizing pUGAS systems, but for other purposes it is of interest to find tight estimates of the attraction region. 

Next proposition, motivated by \cite[Theorem 1.25 in Chapter V]{BhS02}, shows under additional assumptions a result of such kind.
\begin{proposition}
\label{prop:from_ULIM_to_UGATT_strengthening}
Let $\Sigma$ be an RFC system and assume that $\A$ is a bounded  uniform global weak attractor 
for $\Sigma$. If
\begin{eqnarray}
\forall\eps>0\  \exists \delta >0:\ \sup_{y\in \A_\delta}\|y\|_{P_+(\A)}<\eps,
\label{eq:Continuity_reachability_sets}
\end{eqnarray}
then $P_+(\A)$ is a bounded invariant UGATT set.

If additionally $\A$ is a robust invariant set, then there is no UGATT set $S$ so that $\A\subset S \subset P_+(\A)$ and $S$ is not dense in $P_+(\A)$ w.r.t. topology defined by $\|\cdot\|$.
\end{proposition}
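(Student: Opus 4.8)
The plan is to treat the two assertions separately, since the first concerns only the geometry of the prolongation $P_+(\A)$, while the second is a minimality statement. For the first assertion, invariance of $P_+(\A)$ is already supplied by Lemma~\ref{lem:properties_A_epsilon_0}, and boundedness is immediate: since $P_+(\A)\subseteq\A_\eps$ for every $\eps>0$ and each $\A_\eps$ is bounded by Proposition~\ref{prop:from_ULIM_to_UGATT}, the set $P_+(\A)$ is bounded. The only substantive point is UGATT of $P_+(\A)$, and this is exactly where hypothesis \eqref{eq:Continuity_reachability_sets} enters.

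To prove UGATT of $P_+(\A)$, I would fix $r,\eps>0$ and first use \eqref{eq:Continuity_reachability_sets} to choose $\delta>0$ with $\sup_{y\in\A_\delta}\|y\|_{P_+(\A)}<\eps$. Next, since $\A$ and $P_+(\A)$ are bounded, Lemma~\ref{lem:Norms_Changing} provides a radius $R:=r+\|P_+(\A)\|+\|\A\|$ such that $\|x\|_{P_+(\A)}\leq r$ implies $\|x\|_\A\leq R$. Then I would invoke the reachability estimate established inside the proof of Proposition~\ref{prop:from_ULIM_to_UGATT} (with $\delta$ playing the role of $\eps$), which yields a time $\tau(R,\delta)$ so that $\|x\|_\A\leq R$, $d\in\Dc$ and $t\geq\tau(R,\delta)$ force $\phi(t,x,d)\in\A_\delta$. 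Consequently $\|\phi(t,x,d)\|_{P_+(\A)}\leq\sup_{y\in\A_\delta}\|y\|_{P_+(\A)}<\eps$, so UGATT holds with attraction time $\tau_{P_+(\A)}(r,\eps):=\tau(R,\delta)$.

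For the second assertion I would argue by contradiction. Suppose $S$ is UGATT, $\A\subseteq S\subseteq P_+(\A)$ and $S$ is not dense in $P_+(\A)$; then there is $z\in P_+(\A)$ with $\rho:=\|z\|_S>0$. Because $z\in P_+(\A)=\bigcap_{\eps>0}\A_\eps$, for each $n$ we have $z\in\A_{1/n}=\Rc(B_{1/n}(\A))$, so there exist $t_n\geq 0$, $d_n\in\Dc$ and $x_n$ with $\|x_n\|_\A<1/n$ and $z=\phi(t_n,x_n,d_n)$. I would then split on the behaviour of $(t_n)$. If some subsequence stays bounded, say $t_n\leq h$, then robust invariance of $\A$ (Definition~\ref{def:0-invariance}), applied with this $h$, makes $\|z\|_\A=\|\phi(t_n,x_n,d_n)\|_\A$ arbitrarily small for large $n$, forcing $z\in\overline\A\subseteq\overline S$ and hence $\|z\|_S=0$, contradicting $\rho>0$. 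Otherwise $t_n\to\infty$; since $\A\subseteq S$ gives $\|x_n\|_S\leq\|x_n\|_\A\to 0$, UGATT of $S$ with radius $1$ and precision $\rho/2$ yields a time $\tau'$ so that $t_n\geq\tau'$ for large $n$ forces $\|z\|_S=\|\phi(t_n,x_n,d_n)\|_S\leq\rho/2<\rho$, again a contradiction.

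The main obstacle is the second assertion, specifically the case split on the reaching times $t_n$: robust invariance controls only bounded-time behaviour, whereas UGATT controls only large-time behaviour, so neither hypothesis alone suffices. The delicate point is that $z$ is reached from points collapsing onto $\A$, yet may be reached either quickly (handled by robust invariance) or only after unboundedly long times (handled by UGATT), and one must check that these two regimes are exhaustive. A minor technical care is also required in the first part, namely converting a $\|\cdot\|_{P_+(\A)}$-ball into a $\|\cdot\|_\A$-ball via Lemma~\ref{lem:Norms_Changing}, which is legitimate precisely because both $\A$ and $P_+(\A)$ are bounded.
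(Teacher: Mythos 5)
Your proposal is correct and follows essentially the same route as the paper: the first part is the paper's argument (the reachability estimate \eqref{eq:UGATT_wrt_Aeps} combined with hypothesis \eqref{eq:Continuity_reachability_sets}, plus a shift of the attraction time using boundedness of $\A$ and $P_+(\A)$), and the second part rests on the same two ingredients used in the same roles --- robust invariance of $\A$ excludes short reaching times for a point $z\in P_+(\A)\setminus \overline{S}$, while UGATT of $S$ excludes long ones. The only difference is organizational: the paper derives the contradiction from a single reaching trajectory after fixing one $\delta=\delta(\eps,h)$ with $h>\tau(1,\eps)$, whereas you extract a sequence of reaching triples and split on boundedness of the times $t_n$; both versions are valid.
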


\begin{proof}
Let \eqref{eq:Continuity_reachability_sets} holds. For any $\eps>0$ pick $\delta=\delta(\eps)$ as in \eqref{eq:Continuity_reachability_sets}. According to \eqref{eq:UGATT_wrt_Aeps} for any $R>0$ we have
\[
\|x\|_{\A} \leq R ,\ d\in \Dc ,\ t\geq \tau(R,\delta(\eps)) \srs \phi(t,x,d) \in \A_\delta
\]
and in view of \eqref{eq:Continuity_reachability_sets}:
\[
\|x\|_{\A} \leq R ,\ d\in \Dc ,\ t\geq \tau(R,\delta(\eps)) \srs \|\phi(t,x,d)\|_{P_+(\A)}\leq \eps.
\]
Since $\A \subset P_+(\A)$ it follows that $\|x\|_{\A} \leq \|x\|_{P_+(\A)}+C_1$, for a certain $C_1>0$. Denoting 
$\bar\tau(R,\eps):=\tau(R+C_1,\eps)$ we finally obtain
\[
\|x\|_{P_+(\A)} {\leq} R ,\, d\in \Dc ,\, t{\geq} \bar\tau(R,\delta(\eps)) \srs \|\phi(t,x,d)\|_{P_+(\A)}{\leq} \eps.
\]
This shows, that $P_+(\A)$ is UGATT. $P_+(\A)$ is bounded since $\A_\eps$ are bounded for any $\eps>0$. Invariance of $P_+(\A)$ follows from 
Lemma~\ref{lem:properties_A_epsilon_0}.
The first part of the proposition is proved.

Now assume that $\A$ is a robust invariant set. Assume that the claim of the proposition does not hold, and there exists a UGATT set $S$ which is not dense in $P_+(\A)$ (w.r.t. topology generated by $\|\cdot\|$) and so that $\A\subset S \subset P_+(\A)$.

Then for any $r,\eps >0$ there is a $\tau=\tau(r,\eps)$ satisfying
\begin{eqnarray}
\hspace{-8mm} \|x\|_{S} \leq r,\ d\in\Dc,\ t \geq \tau(r,\eps) \ \Rightarrow \ \|\phi(t,x,d)\|_{S} \leq \eps.
\label{eq:UGATT_proff}
\end{eqnarray}	

Since $S$ is not dense in $P_+(\A)$, there is some $y\in P_+(\A)$: $\|y\|_S>0$. Set $\eps:=\frac{\|y\|_S}{2}$ and $r:=1$.
Pick $h>\tau(1,\eps)$. 

As $\A$ is a robust invariant set, for these $\eps,h$ there is a $\delta >0$ (without loss of generality let $\delta<1$) so that \eqref{eq:RobEqPoint} holds. 
Since $\A \subset S$, $\|\phi(t,x,d)\|_S\leq \|\phi(t,x,d)\|_\A$ and thus
\begin{eqnarray}
\hspace{-8mm} t\in[0,h],\ \|x\|_{\A} \leq \delta,\ d\in\Dc \; \Rightarrow \;  \|\phi(t,x,d)\|_{S} \leq \eps.
\label{eq:RobEqPoint_proof}
\end{eqnarray}

As $y\in P_+(\A)$, then also $y\in \A_\delta = \Rc (B_\delta(\A))$.
Pick any $t_\delta>0$, $x_\delta \in B_\delta(\A)$, $d_\delta\in\Dc$ so that $\phi(t_\delta,x_\delta,d_\delta)=y$.

Applying \eqref{eq:RobEqPoint_proof} for $x:=x_\delta$ we see that 
\[
\|\phi(t,x_\delta,d_\delta)\|_{S} \leq \eps = \frac{\|y\|_S}{2}
\]
for $t\in[0,h]$, and in particular for $t\in[0,\tau(1,\eps)]$.

But $\|\phi(t_\delta,x_\delta,d_\delta)\|_S=\|y\|_S = 2\eps$ which means that $t_\delta>h>\tau(1,\eps)$.

On the other hand, $\A \subset S$ and thus $\|x_\delta\|_S \leq \|x_\delta\|_\A \leq \delta <1=r$ and according to \eqref{eq:UGATT_proff} it holds \mbox{$\|\phi(t_\delta,x_\delta,d_\delta)\|_{S} \leq \eps$}, a contradiction.
This finishes the proof.
\end{proof}

In the next example, we are going to show that the assumption of robust invariance of $\A$ is necessary 
for the second part of Proposition~\ref{prop:from_ULIM_to_UGATT_strengthening} to hold.
 
\begin{example}
\label{exam:UGATT_Robustness_Invariant_Sets}
Let $\Dc :=L^\infty(\R_+,\R)$, $x(t), y(t) \in\R$ and consider a planar system
\begin{subequations}
\begin{align}
\dot{x}(t) =&\ |d(t)| \big(1-x(t)\big) |y(t)| - x^3(t) - x^{1/3}(t), \\
\dot{y}(t) =&\ - y^3(t) - y^{1/3}(t).
\end{align}
\label{eq:Counterexample_Robust_Inv_Sets}
\end{subequations}
It is easy to see that \eqref{eq:Counterexample_Robust_Inv_Sets} is a forward complete system, $\A:=(0,0)$ is its equilibrium which is however not a robust equilibrium. All solutions of \eqref{eq:Counterexample_Robust_Inv_Sets} converge in finite time to $\A$ which makes $\A$ a uniform global attractor (more details you can find in \cite{MiW17a}). 
Moreover, \eqref{eq:Counterexample_Robust_Inv_Sets} is RFC since $\dot{x}(t) > 0$ whenever $x(t) < 0$ and $\dot{x}(t) < 0$ for $x(t) > 1$.

Pick $\eps<1$. The following inclusions hold:
\[
\Rc\big((-\sqrt{\eps},\sqrt{\eps})^2\big) \subset \Rc\big(B_\eps((0,0)^T)\big) \subset
\Rc\big((-\eps,\eps)^2\big).
\]
It is not hard to compute that $\Rc\big((-\eps,\eps)^2\big)= [-\eps,1) \times (-\eps,\eps)$,
which implies that $P_+(\A)=[0,1)\times\{0\}$.

Thus, $P_+(\A)$ is UGATT, but it contains a set $S=\A$ which is again UGATT and is not dense in $P_+(\A)$.
~ \hfill{} $\square$
\end{example}

\subsection{Relations of weak attractivity to other stability notions}

For systems without disturbances the concept of weak attractivity has been introduced in \cite{Bha66} and it was one of the basic notions upon which stability theory for systems without disturbances in \cite{BhS02} has been developed. 
On the other hand, in the language of \cite{SoW96}, weak attractivity is the limit property with zero
gain. 

As long as the author is concerned, the notion of uniform weak attractivity has been introduced in \cite{MiW17a, MiW17b},  but it was implicitly used in \cite[Proposition 2.2]{Tee13} and \cite[Proposition 3]{SuT16} to characterize UGAS of finite-dimensional hybrid and stochastic systems.
The concept of weak attractivity is intimately related to that of recurrent sets:
\begin{definition}{}
    \label{d:stability_new_2}
Consider a system $\Sigma=(X,\Dc,\phi)$. A set $\A \subset X$ is called
\begin{enumerate}[label=(\roman*)]

\item globally recurrent, if
\begin{eqnarray}
\hspace{-5mm} x\in X,\ d \in\Dc \srs \exists t\geq 0: \phi(t,x,d) \in \A.
\label{eq:Glob_recurr}
\end{eqnarray}

\item uniformly globally recurrent, if $\forall R>0$ $\exists \tau=\tau(R)$:
\begin{eqnarray}
\hspace{-15mm} \|x\|_\A \leq R,\ d \in\Dc \srs \exists t\in[0,\tau]: \phi(t,x,d) \in \A.
\label{eq:Unif_Glob_recurr}
\end{eqnarray}

\end{enumerate}
\end{definition}

The notion of a recurrent set is fairly classical, see e.g. \cite[Definition 1.1 in Chapter 3]{BhS02}.
Together with a more recent notion of uniformly recurrent sets (see e.g. \cite[Lemma 6.5]{Kha11}, \cite[Section 2.4]{Tee13}) it is widely used in the theory of hybrid and especially stochastic systems \cite{Kha11,Tee13,SuT16}.
The following relations between weak attractivity and recurrence are valid:	
\begin{remark}
\label{rem:Weak_attractivity_and_recurrence}
Let $\Sigma$ be a system and let $\A \subset X$ be arbitrary.
Then the following relations hold:
\vspace{-2mm}
\begin{itemize}
   \item If $\A$ is globally recurrent, then $\A$ is globally weakly attractive.	\vspace{-2mm}
   \item If $\A$ is uniformly globally recurrent, then $\A$ is uniformly globally weakly attractive.	\vspace{-2mm}
   \item $\A$ is globally weakly attractive $\Iff$ any neighborhood of $\A$ is globally recurrent.	\vspace{-2mm}
   \item $\A$ is uniformly globally weakly attractive $\Iff$ any neighborhood of $\A$ is uniformly globally recurrent.
\end{itemize}
\end{remark}

Finally, in some works the term of weak attractivity (or weak stability) is used with a meaning, which is quite different from that used in this paper.
In particular, in infinite-dimensional linear systems theory "weak" stability means convergence of solutions in the weak topology on $X$, see e.g. \cite{EFN07}. 
In \cite[Definition 4.1]{LiK12} weak attractivity has been used to study convergence properties of stochastic systems with small parameters. 
In \cite[Definition 9.2]{Rox65} weak stability of systems with multiple solutions is understood in the sense that for each neighborhood $W_1$ of $0$ there is a neighborhood $W_2$ of 0, so that for each $x\in W_2$ there is a solution staying within $W_1$ for all times, and analogously weak attractivity could be defined. 
Yet another notion hiding behind the name "weak attractivity" has been used in the context of dynamical systems under perturbations in \cite{Gru02b}.

\section{Characterizations of UGAS}

Arguments, exploited in the previous section to characterize pUGAS, can be adapted to obtain criteria of UGAS.

The relation between uniform global attractivity and uniform global weak attractivity is given by:
\begin{proposition}
\label{prop:UGATT_and_Un_approachability_v2}
Consider a forward complete system $\Sigma=(X,\Dc,\phi)$.
Then $\A$ is a UGATT robustly invariant set if and only if  $\A$ is uniformly stable and uniformly globally weakly attractive.
\end{proposition}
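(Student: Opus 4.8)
The plan is to split the equivalence into necessity ($\Rightarrow$) and sufficiency ($\Leftarrow$), and in both directions I would first record the elementary observation that UGATT always implies uniform global weak attractivity: if $\|x\|_\A\le r$ and $t\ge\tau(r,\eps)$ force $\|\phi(t,x,d)\|_\A\le\eps$, then the single choice $t=\tau(r,\eps)$ already meets the requirement $\exists t\le\tau$ of \eqref{eq:Uniform_weak2}. Consequently the only genuine content of the forward direction is uniform local stability, and of the backward direction, uniform global attractivity together with robust invariance.

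For necessity I would assume $\A$ is UGATT and robustly invariant and deduce ULS by gluing a short-time estimate to a long-time one. Fixing $\eps>0$, I set $h:=\tau(\eps,\eps)$, the attraction time from \eqref{eq:UAG_with_zero_gain} for radius $r=\eps$ and accuracy $\eps$, and let robust invariance \eqref{eq:RobEqPoint} supply $\delta_0=\delta(\eps,h)$ governing the horizon $[0,h]$; then $\delta:=\min\{\delta_0,\eps\}$ works. Indeed, for $\|x\|_\A\le\delta$ the bound $\|\phi(t,x,d)\|_\A\le\eps$ holds on $[0,h]$ by robustness and on $[h,\infty)$ by UGATT (since $\|x\|_\A\le\delta\le\eps$ and $t\ge h=\tau(\eps,\eps)$), and splicing the two ranges yields \eqref{eq:Uniform_Stability}.

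For sufficiency I would assume ULS and uniform global weak attractivity. The robustness inequality \eqref{eq:RobEqPoint} is immediate, since ULS already guarantees the bound for \emph{all} $t\ge0$, hence in particular on $[0,h]$. To produce UGATT I would fix $r,\eps>0$, take the ULS radius $\delta=\delta(\eps)$ for accuracy $\eps$, and the weak-attraction time $\tau_w=\tau_w(\delta,r)$ steering $B_r(\A)$ into $B_\delta(\A)$. Given $\|x\|_\A\le r$, I choose $t^*\le\tau_w$ with $\|\phi(t^*,x,d)\|_\A\le\delta$ and apply ULS to the state $\phi(t^*,x,d)$ with the shifted disturbance $d(\cdot+t^*)\in\Dc$; the cocycle property ($\Sigma$\ref{axiom:Cocycle}) then gives $\|\phi(t,x,d)\|_\A\le\eps$ for all $t\ge t^*$, in particular for all $t\ge\tau_w$, so $\tau(r,\eps):=\tau_w(\delta(\eps),r)$ witnesses \eqref{eq:UAG_with_zero_gain}.

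The hard part will be invariance of $\A$, which is demanded by the definition of robust invariance but is not literally forced by the hypotheses. For $x\in\A$ one has $\|x\|_\A=0$, so ULS gives $\|\phi(t,x,d)\|_\A\le\eps$ for every $\eps>0$, whence $\|\phi(t,x,d)\|_\A=0$ and $\phi(t,x,d)\in\overline{\A}$; this only yields $\Rc(\A)\subset\overline{\A}$. Since ULS, uniform weak attractivity and UGATT all depend on $\A$ solely through $\|\cdot\|_\A=\|\cdot\|_{\overline{\A}}$, I would pass to the closure $\overline{\A}$, for which $\Rc(\overline{\A})\subset\overline{\A}$ genuinely holds; thus the statement is established for closed $\A$, which is the relevant case. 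Besides this closure bookkeeping, the only place where care is needed is the tracking of the disturbance shift $d(\cdot+t^*)$ through the cocycle identity in the sufficiency step.
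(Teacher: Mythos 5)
Your proof is correct and follows essentially the same route as the paper: for necessity you splice the robust-invariance estimate on $[0,h]$ with the UGATT estimate for $t\ge h$ (the paper does exactly this, with an arbitrary $r$ in place of your $r=\eps$ and $\delta=\min\{r,\hat\delta\}$), and for sufficiency you steer into $B_\delta(\A)$ by uniform weak attractivity and then hold the trajectory within $\eps$ by ULS, the cocycle property and shift invariance, which is word-for-word the paper's argument. The one place where you diverge is the invariance issue, and there you are actually more careful than the paper: the paper's proof simply asserts that ULS ``shows that $\A$ is a robust invariant set,'' whereas, as you observe, ULS only yields $\Rc(\A)\subset\overline{\A}$, and invariance in the sense of Definition~\ref{def:0-invariance} can genuinely fail for non-closed $\A$ (for $\dot x=-\mathrm{sgn}(x)\sqrt{|x|}$ the set $[-1,1]\setminus\{0\}$ is ULS and uniformly globally weakly attractive, yet a trajectory from $x=1$ reaches $0\notin\A$ in finite time). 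Your passage to $\overline{\A}$, justified by $\|\cdot\|_{\A}=\|\cdot\|_{\overline{\A}}$, repairs this and makes explicit the tacit assumption (closedness of $\A$) under which the equivalence, and the paper's own proof, are valid.
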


\begin{proof}
$\Rightarrow$. 
Let $\Sigma=(X,\Dc,\phi)$ be a forward complete system.
Due to items \ref{def:UniformGlobalWeakAttractivity} and \ref{def:UniformGlobalAttractivity}
of Definition~\ref{d:stability_new}, UGATT of $\A$ implies uniform global weak attractivity of $\A$. To show uniform
 stability of $\A$ fix $\eps>0$ and
$r>0$. By item\,\ref{def:UniformGlobalAttractivity} of Definition~\ref{d:stability_new} there is a $\tau=\tau(\eps,r)>0$ s.t.
\begin{eqnarray*}
\hspace{-5mm}\|x\|_\A \leq r,\ d\in\Dc,\ t \geq \tau(r,\eps) \srs \|\phi(t,x,d)\|_\A \leq \eps.
\end{eqnarray*}
Since $\A$ is a robust invariant set of $\Sigma$, there is a $\hat\delta>0$ corresponding to $\tau$ so that 
\[
t\in[0,\tau],\ \|x\|_\A \leq \hat\delta, \ d \in \Dc \srs  \|\phi(t,x,d)\|_\A \leq \eps.
\]
The combination of the two implications shows that $\A$ is uniformly stable with $\delta:=\min\{r,\hat\delta\}$.

$\Leftarrow$. Fix $\eps>0$. Since $\A$ is uniformly stable, there is a $\delta>0$ so that
\eqref{eq:Uniform_Stability} holds. This shows that $\A$ is a robust invariant set.
Now pick any $r>0$. Since $\A$ is a uniform global weak attractor, for the
above $\delta$ there exists a $\tau=\tau(\delta,r)$ so that 
\begin{eqnarray*}
\hspace{-9mm}\|x\|_\A \leq r,\ d\in\Dc \srs \exists \bar{t}(x,d,\delta) \leq \tau:\ \|\phi(t,x,d)\|_\A \leq \delta.
\end{eqnarray*}
Finally, ULS of $\Sigma$ and the cocycle property imply that
\[
\|x\|_\A \leq r,\ d\in\Dc,\ t \geq \bar{t}(x,d,\delta) \srs\|\phi(t,x,d)\|_\A \leq \eps.
\]
Specifying this to $t\geq \tau(\eps,r)$, we see that $\A$ is UGATT.
\end{proof}


Now we are able to characterize UGAS property.
\begin{theorem}
\label{cor:ULIM_RFC_LS_v2}
Consider a forward complete system $\Sigma=(X,\Dc,\phi)$. Let $\A \subset X$ be a bounded set. 
The following statements are equivalent:
\begin{enumerate}[label=(\roman*)]
 \item \label{enum:UGAS} $\A$ is UGAS.
 \item \label{enum:RFC_UGATT_RIS} $\Sigma$ is RFC and $\A$ is a UGATT robust invariant set.
 \item \label{enum:RFC_ULS_UGWATT} $\Sigma$ is RFC and $\A$ is uniformly stable and uniformly globally weakly attractive.
 \item \label{enum:UGS_UGWATT} $\A$ is uniformly globally stable and uniformly globally weakly attractive.
 \item \label{enum:UGS_UGATT} $\A$ is uniformly globally stable and UGATT.
\end{enumerate}
\end{theorem}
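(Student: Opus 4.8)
The plan is to prove the five statements equivalent through the single cycle (i)$\Rightarrow$(ii)$\Rightarrow$(iii)$\Rightarrow$(iv)$\Rightarrow$(v)$\Rightarrow$(i), assembling the auxiliary results of this section exactly as in the proof of Theorem~\ref{thm:RFC_weak_attr_imply_pUGAS}, only with ``globally stable'' in place of ``Lagrange stable''. A preliminary reduction streamlines everything: since $\|x\|_\A=\|x\|_{\overline\A}$ for every $x$, all five properties are unchanged upon replacing $\A$ by its closure, so I may assume $\A$ closed. Then $\|y\|_\A=0\Iff y\in\A$, and I can read off genuine invariance from any estimate that forces $\|\phi(t,x,d)\|_\A=0$ on $\A$.

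For (i)$\Rightarrow$(ii) I unpack the $\KL$-estimate \eqref{eq:UGAS_wrt_A_estimate}. The time-uniform bound $\|\phi(t,x,d)\|_\A\leq\beta(\|x\|_\A,0)$, finite on bounded sets, gives RFC via Proposition~\ref{prop:RFC_equivalence_wrt_A}; dominating $\beta(\cdot,0)\in\K$ by a $\Kinf$-function yields UGS, hence ULS, and ULS supplies the robustness inequality \eqref{eq:RobEqPoint}. Inserting $x\in\A$ into the estimate gives $\|\phi(t,x,d)\|_\A\leq\beta(0,t)=0$, i.e.\ invariance, so $\A$ is robust invariant; choosing for each $r,\eps$ a time $\tau$ with $\beta(r,t)\leq\eps$ for $t\geq\tau$ (possible as $\beta(r,\cdot)\in\LL$) yields UGATT. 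The step (ii)$\Rightarrow$(iii) is the forward direction of Proposition~\ref{prop:UGATT_and_Un_approachability_v2} (RFC implies forward completeness), carrying RFC along.

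For (iii)$\Rightarrow$(iv) the common ingredient is uniform global weak attractivity, so I only upgrade ULS to UGS: since $\A$ is bounded and $\Sigma$ is RFC, Lemma~\ref{lem:Weak_attr_plus_RFC_imply_pUGS} gives Lagrange stability, and Remark~\ref{rem:UGS_equals_LagrStab_and_ULS} combines ULS with Lagrange stability into UGS. For (iv)$\Rightarrow$(v), UGS implies ULS and, again via Proposition~\ref{prop:RFC_equivalence_wrt_A}, RFC (hence forward completeness), so the backward direction of Proposition~\ref{prop:UGATT_and_Un_approachability_v2} turns uniform stability plus uniform global weak attractivity into UGATT; together with UGS this is (v). Finally, for (v)$\Rightarrow$(i), UGS forces $\|\phi(t,x,d)\|_\A=0$ on the closed set $\A$, hence invariance, so with $\A$ bounded, invariant, UGATT and globally stable, Lemma~\ref{lem:UGATT_plus_LagrangeSt_imply_pUGAS}(ii) delivers UGAS.

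I expect the delicate points to be bookkeeping rather than analysis: the closure reduction needed to convert distance-based decay estimates into honest invariance of $\A$, and tracking which items carry RFC explicitly versus which merely imply it through UGS. All the substantive content --- producing UGATT sets, extracting Lagrange stability from weak attractivity, and exchanging uniform stability for uniform attractivity --- is already encapsulated in Proposition~\ref{prop:from_ULIM_to_UGATT}, Lemma~\ref{lem:Weak_attr_plus_RFC_imply_pUGS}, Proposition~\ref{prop:UGATT_and_Un_approachability_v2} and Lemma~\ref{lem:UGATT_plus_LagrangeSt_imply_pUGAS}, so the theorem is essentially an assembly of these.
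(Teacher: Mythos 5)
Your assembly is correct in substance and follows essentially the same route as the paper: the paper likewise proves \ref{enum:UGAS}$\Rightarrow$\ref{enum:RFC_UGATT_RIS} directly from the $\KL$ estimate (monotone bound for stability/RFC, solving $\beta(r,\tau)=\eps$ for UGATT), obtains \ref{enum:RFC_UGATT_RIS}$\Iff$\ref{enum:RFC_ULS_UGWATT} from Proposition~\ref{prop:UGATT_and_Un_approachability_v2}, gets \ref{enum:RFC_ULS_UGWATT}$\Rightarrow$\ref{enum:UGS_UGWATT} from Lemma~\ref{lem:Weak_attr_plus_RFC_imply_pUGS} combined with Remark~\ref{rem:UGS_equals_LagrStab_and_ULS}, deduces \ref{enum:UGS_UGWATT}$\Rightarrow$\ref{enum:UGS_UGATT} by noting that UGS gives RFC and reusing the equivalence \ref{enum:RFC_UGATT_RIS}$\Iff$\ref{enum:RFC_ULS_UGWATT}, and closes the loop \ref{enum:UGS_UGATT}$\Rightarrow$\ref{enum:UGAS} with Lemma~\ref{lem:UGATT_plus_LagrangeSt_imply_pUGAS}(ii).

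The only substantive difference is your preliminary closure reduction, and there you overreach: the blanket claim that all five properties are unchanged upon replacing $\A$ by $\overline{\A}$ is true for \ref{enum:UGAS}, \ref{enum:RFC_ULS_UGWATT}, \ref{enum:UGS_UGWATT}, \ref{enum:UGS_UGATT}, which are formulated purely in terms of $\|\cdot\|_{\A}=\|\cdot\|_{\overline{\A}}$, but not for \ref{enum:RFC_UGATT_RIS}: invariance ($\Rc(\A)\subset\A$) is a set-theoretic, not a distance-based, property, and $\overline{\A}$ can be invariant while $\A$ is not. Concretely, for $\dot x=-x$ on $\R$ (no disturbances) and $\A=[-1,1]\setminus\{1/2\}$, one has $\|\phi(t,x)\|_{\A}\leq e^{-t}\|x\|_{\A}$, so \ref{enum:UGAS} holds, yet $\phi(\ln 2,1)=1/2\notin\A$, so $\A$ is not invariant and \ref{enum:RFC_UGATT_RIS} fails; thus the reduction (and indeed the equivalence itself) genuinely requires $\A$ to be closed. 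To be fair, this is a blind spot of the paper rather than of your assembly: the paper's step \ref{enum:UGAS}$\Rightarrow$\ref{enum:RFC_UGATT_RIS} asserts ``robust invariance of $\A$'' directly from the $\KL$ bound, and its step \ref{enum:UGS_UGATT}$\Rightarrow$\ref{enum:UGAS} invokes Lemma~\ref{lem:UGATT_plus_LagrangeSt_imply_pUGAS} without verifying its invariance hypothesis --- both tacitly presuppose exactly the closedness you tried to arrange, and your version, which localizes where invariance enters (substituting $x\in\A$ into the UGS/UGAS estimate), is the more careful of the two. The clean fix is to state closedness of $\A$ as a standing hypothesis (or to read the invariance in \ref{enum:RFC_UGATT_RIS} as invariance of $\overline{\A}$), rather than to present the closure as a costless normalization.
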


\begin{proof}
\ref{enum:UGAS} $\Rightarrow$ \ref{enum:RFC_UGATT_RIS}. Estimate $\beta(\|x\|_\A,t)\leq \beta(\|x\|_\A,0)$ shows global stability of $\A$ and hence RFC of a system $\Sigma$ and robust invariance of $\A$. UGATT of $\A$ holds with $\tau(r,\eps)$ as a solution of the equation $\beta(r,\tau)=\eps$ provided such a solution exists or $\tau(r,\eps):=0$ if this equation is not solvable.

\ref{enum:RFC_UGATT_RIS} $\Iff$ \ref{enum:RFC_ULS_UGWATT}. Follows due to Proposition~\ref{prop:UGATT_and_Un_approachability_v2}.

\ref{enum:RFC_ULS_UGWATT} $\Rightarrow$ \ref{enum:UGS_UGWATT}. Lemma~\ref{lem:Weak_attr_plus_RFC_imply_pUGS} shows that $\A$ is Lagrange stable. Since $\A$ is ULS, Remark~\ref{rem:UGS_equals_LagrStab_and_ULS} ensures UGS of $\A$.

\ref{enum:UGS_UGWATT} $\Rightarrow$ \ref{enum:UGS_UGATT}. Since $\A$ is UGS, $\Sigma$ is RFC. Now already proved equivalence \ref{enum:RFC_UGATT_RIS} $\Iff$ \ref{enum:RFC_ULS_UGWATT} shows the claim.

\ref{enum:UGS_UGATT} $\Rightarrow$ \ref{enum:UGAS}. Follows by item (ii) of Lemma~\ref{lem:UGATT_plus_LagrangeSt_imply_pUGAS}.
\end{proof}

\section{Weak attractivity vs uniform weak attractivity}

We devote this final section to the comparison between weak attractivity and uniform weak attractivity.
This can be shown by looking at the special classes of systems: linear systems over Banach spaces and ordinary differential equations. As we will see, in the first case uniform weak attractivity is much stronger than weak attractivity and is equivalent to UGAS of $\{0\}$. In the ODE case uniform and non-uniform weak attractivity coincide provided the disturbances are uniformly bounded. In both cases, criteria for UGAS and pUGAS become much simpler.

\subsection{Linear systems}
\label{sec:linear_systems}

Let us take a quick look at linear systems over Banach spaces without disturbances of the form 
\begin{eqnarray}
\dot{x} = Ax,
\label{eq:LinSys}
\end{eqnarray}
where $A:D(A)\to X$ is the infinitesimal generator of a $C_0$-semigroup $T(\cdot)$ of bounded operators over a Banach space $X$ with a domain of definition $D(A)$.
Mild solutions of \eqref{eq:LinSys} are given by $\phi(t,x)=T(t)x$, for any $x\in X$ and $t\geq 0$.
In this way, \eqref{eq:LinSys} gives rise to a system without disturbances.

For such systems criteria from Theorem~\ref{cor:ULIM_RFC_LS_v2} take a particularly simple form.

\begin{proposition}
\label{prop:Uniform_appr_equals_Exp_Stability_for_linear_systems_v2}
The following statements are equivalent for the system \eqref{eq:LinSys}:
\begin{enumerate}[label=(\roman*)]
  \item \label{item:ExpSt} $T$ is exponentially stable.
	\item \label{item:LinUGAS} $\{0\}$ is UGAS. 
	\item \label{item:LinUGATT} $\{0\}$ is uniformly globally attractive.
	\item \label{item:LinUGWATT} $\{0\}$ is uniformly globally weakly attractive.
	\item \label{item:Lin_noncoerc} There is a non-coercive Lyapunov function for \eqref{eq:LinSys}.
\end{enumerate}

\end{proposition}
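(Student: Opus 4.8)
The plan is to exploit two special features of the linear system \eqref{eq:LinSys}: it automatically satisfies the regularity hypotheses of the general theory, and its homogeneity $\phi(t,\lambda x)=\lambda\,\phi(t,x)$ collapses practical notions onto their global counterparts. First I would record the preliminaries. Since $T(\cdot)$ is a $C_0$-semigroup, the map $t\mapsto T(t)x$ is continuous and $M_\tau:=\sup_{t\in[0,\tau]}\|T(t)\|<\infty$ for every $\tau>0$. Hence $\phi(t,x)=T(t)x$ is defined for all $t\geq0$ (forward completeness), $\|\Rc^\tau(\overline{B_C})\|\leq M_\tau C<\infty$ (so $\Sigma$ is RFC), and $x\mapsto\sup_{t\in[0,h]}\|T(t)x\|\leq M_h\|x\|$ is continuous at $x=0$, so that $\{0\}$ is a robust equilibrium in the sense of Definition~\ref{def:0-invariance} (cf.\ Remark~\ref{rem:Robustness_of_Equilibirum}). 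These facts let us apply Theorem~\ref{cor:ULIM_RFC_LS_v2}, Proposition~\ref{prop:from_ULIM_to_UGATT}, Proposition~\ref{prop:noncoeLT_plus_FC_implies_ULIM_v2} and Lemma~\ref{lem:Weak_attr_plus_RFC_imply_pUGS} with $\A=\{0\}$.

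Next I would close the easy part of the cycle. For (i)$\Rightarrow$(ii): exponential stability gives $\|T(t)x\|\leq Me^{-\omega t}\|x\|$, i.e.\ \eqref{eq:UGAS_wrt_A_estimate} holds with $\beta(r,t)=Me^{-\omega t}r\in\KL$, so $\{0\}$ is UGAS. The implications (ii)$\Rightarrow$(iii)$\Rightarrow$(iv) are contained in the general theory: UGAS implies UGATT (choose $\tau(r,\eps)$ as a solution of $\beta(r,\tau)=\eps$, as in the proof of Theorem~\ref{cor:ULIM_RFC_LS_v2}), and UGATT trivially implies uniform global weak attractivity (evaluate the UGATT estimate at $t=\tau$). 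For the Lyapunov statement I would establish (v)$\Rightarrow$(iv) and (i)$\Rightarrow$(v): the former is immediate from Proposition~\ref{prop:noncoeLT_plus_FC_implies_ULIM_v2}, since $\Sigma$ is forward complete; for the latter, assuming exponential stability I would verify that
\[
V(x):=\int_0^\infty \|T(t)x\|\,dt
\]
is a non-coercive Lyapunov function for \eqref{eq:LinSys} w.r.t.\ $\{0\}$. Indeed, the exponential bound gives $0<V(x)\leq \tfrac{M}{\omega}\|x\|$ for $x\neq0$, which is \eqref{eq:1} with $\psi_2(r)=\tfrac{M}{\omega}r$; subadditivity of $V$ yields $|V(x)-V(y)|\leq\tfrac{M}{\omega}\|x-y\|$ and hence continuity; and the semigroup identity $V(T(h)x)-V(x)=-\int_0^h\|T(t)x\|\,dt$ gives $\dot V(x)=-\|x\|$, i.e.\ \eqref{DissipationIneq_UGAS_With_Disturbances} with $\alpha(r)=r$.

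The remaining and decisive implication is (iv)$\Rightarrow$(i), and here lies the main obstacle: uniform \emph{weak} attractivity must be boosted to uniform \emph{exponential} decay. I would proceed in three steps. Step one uses homogeneity to upgrade weak attractivity to uniform boundedness of the semigroup: by Lemma~\ref{lem:Weak_attr_plus_RFC_imply_pUGS}, RFC together with the bounded uniform weak attractor $\{0\}$ forces Lagrange stability, $\|T(t)x\|\leq\sigma(\|x\|)+c$ for some $\sigma\in\Kinf$, $c>0$; testing this on unit vectors gives $\sup_{t\geq0}\|T(t)\|\leq\sigma(1)+c=:M<\infty$. Step two: uniform boundedness means $\|T(t)x\|\leq M\|x\|$, so $\{0\}$ is uniformly (locally) stable (take $\delta=\eps/M$); combined with RFC and uniform global weak attractivity, statement~\ref{enum:RFC_ULS_UGWATT} of Theorem~\ref{cor:ULIM_RFC_LS_v2} is in force and yields that $\{0\}$ is UGAS, which is exactly (ii). Step three converts UGAS into exponential stability by a classical semigroup argument: UGAS gives $\|T(t)\|=\sup_{\|x\|\leq1}\|T(t)x\|\leq\beta(1,t)\to0$, hence $\|T(t_0)\|<1$ for some $t_0>0$, and submultiplicativity $\|T(t)\|\leq\|T(t-nt_0)\|\,\|T(t_0)\|^{n}$ propagates this into an exponential bound $\|T(t)\|\leq M'e^{-\omega t}$ (see, e.g., \cite{CuZ95}).

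I expect Step one --- the passage from the purely asymptotic, trajectory-wise weak attractivity to a bound uniform in both time and initial data --- to be where the real work lies, and it is precisely the homogeneity of \eqref{eq:LinSys} that makes it go through: without it, uniform weak attractivity reduces only to strong stability $T(t)x\to0$, which is well known to be insufficient for exponential decay. Everything else is either a direct citation of the general theory or a routine estimate, so the proposition follows from the chain $(i)\Rightarrow(ii)\Rightarrow(iii)\Rightarrow(iv)\Rightarrow(i)$ together with $(i)\Rightarrow(v)\Rightarrow(iv)$.
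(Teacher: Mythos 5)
Your proof is correct, and its skeleton agrees with the paper's: both treat (ii) $\Rightarrow$ (iii) $\Rightarrow$ (iv) as instances of the general theory and close the loop (iv) $\Rightarrow$ (ii) by combining Lemma~\ref{lem:Weak_attr_plus_RFC_imply_pUGS} (RFC plus a bounded uniform weak attractor yields Lagrange stability) with Theorem~\ref{cor:ULIM_RFC_LS_v2}. The differences lie in the supporting steps. First, where you read the uniform bound $\sup_{t\geq 0,\,\|x\|=1}\|T(t)x\|\leq\sigma(1)+c$ directly off the Lagrange estimate and then use linearity, the paper extracts only pointwise boundedness $\sup_{t\geq 0}\|T(t)x\|<\infty$ and invokes the Banach--Steinhaus theorem; your route is more elementary and shows that the uniform boundedness principle is not actually needed here, since the Lagrange estimate is already uniform on the unit sphere. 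Second, the paper disposes of (i) $\Leftrightarrow$ (ii) by citing \cite{DaM13}, while you prove UGAS $\Rightarrow$ exponential stability by the classical argument ($\|T(t)\|\leq\beta(1,t)\to 0$, then submultiplicativity); both are standard. Third, and most substantively: for statement (v) the paper only remarks that the equivalence (ii) $\Leftrightarrow$ (v) ``follows by Proposition~\ref{t:noncoeLT}'', but that proposition literally gives only (v) $\Rightarrow$ (ii) (using RFC and robustness of the equilibrium, which you verify); the converse direction is left to the reader, implicitly resting on the Lyapunov-equation construction of \cite{CuZ95} mentioned in the preliminaries. Your explicit candidate $V(x)=\int_0^\infty\|T(t)x\|\,dt$, with $\psi_2(r)=\frac{M}{\omega}r$, Lipschitz continuity via $|V(x)-V(y)|\leq V(x-y)$, and $\dot V(x)=-\|x\|$, makes this direction self-contained, and routing (v) $\Rightarrow$ (iv) through Proposition~\ref{prop:noncoeLT_plus_FC_implies_ULIM_v2} rather than Proposition~\ref{t:noncoeLT} is equally valid and arguably more in the spirit of the weak-attractivity framework. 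In short, the paper's proof is shorter because it leans on citations; yours is self-contained and in one place fills a gap that the paper's citation does not literally cover.
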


\begin{proof}
Equivalence  \ref{item:ExpSt} $\Iff$ \ref{item:LinUGAS} is easy to show \cite[Lemma 1]{DaM13}.
Implications \ref{item:LinUGAS} $\Rightarrow$ \ref{item:LinUGATT} $\Rightarrow$ \ref{item:LinUGWATT} holds already for general nonlinear systems.

Let us show that  \ref{item:LinUGWATT} $\Rightarrow$ \ref{item:LinUGAS}. It is well-known that every strongly continuous semigroup satisfies a bound of the type $\|T(t)x\|\leq M e^{\omega t}\|x\|$, for all $t\geq 0, x\in X$ and suitable $M,\omega \in \R$,
    \cite[Theorem 2.1.6]{CuZ95}. Hence it is RFC.  By
    Lemma~\ref{lem:Weak_attr_plus_RFC_imply_pUGS}, $T$ is
    Lagrange stable. In
    particular, for any $x\in X$ it holds that $\sup_{t\geq0}\|T(t)x\| <
    \infty$ and the Banach-Steinhaus theorem implies
		$\sup_{t\geq 0,\, \|x\|=1}\|T(t)x\| < \infty$
    which means that
    $\{0\}$ is globally stable.
		Thus item \ref{enum:UGS_UGWATT} of Theorem~\ref{cor:ULIM_RFC_LS_v2} shows the claim.

Equivalence \ref{item:LinUGAS} $\Iff$ \ref{item:Lin_noncoerc} follows by Proposition~\ref{t:noncoeLT}.
\end{proof}
\begin{remark}
\label{rem:Approachability_weaker_than_Uniform_Approachability_v2}
Proposition~\ref{prop:Uniform_appr_equals_Exp_Stability_for_linear_systems_v2}
shows that uniform weak attractivity is a stronger
requirement than weak attractivity. In fact, every strongly stable
$C_0$-semigroup $T$ (a semigroup, satisfying $T(t)x\to 0$ as $t\to\infty$
for any $x\in X$) is globally weakly attractive, while it is not necessarily exponentially stable (and hence does not have to be uniformly weakly attractive).
\end{remark}

\subsection{Ordinary differential equations}

Here we specify the developments of Section~\ref{sec:pUGAS_equals_ULIM} to the case of ODEs
\begin{eqnarray}
\dot{x}=f(x,d),
\label{eq:ODE_Sys}
\end{eqnarray} 
where $f:\R^n\times D \to \R^n$ is locally Lipschitz continuous w.r.t. the first argument uniformly w.r.t. the second one, $D$ is a compact subset of $\R^m$
and disturbances $d$ belong to the set $\Dc:=L_\infty(\R_+,D)$ of Lebesgue measurable globally essentially bounded functions with values in $D$.

In contrast to
Remark~\ref{rem:Approachability_weaker_than_Uniform_Approachability_v2},
for systems of nonlinear ordinary differential equations with uniformly bounded
disturbances the notions of global weak attractivity and uniform global
weak attractivity coincide which is a simple application of \cite[Corollary III.3]{SoW96}:
\begin{proposition}
\label{prop:Approachability_equals_Uniform_Approachability}
Consider a system \eqref{eq:ODE_Sys} with $\Dc$ as above. Let $\A \subset \R^n$ be any bounded set. Then $\A$ is uniformly globally weakly attractive if and only if it is globally weakly attractive.
\end{proposition}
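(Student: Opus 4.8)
The plan is to prove the two implications separately, with essentially all of the content concentrated in the harder direction, where finite-dimensionality is decisive. The implication ``uniform $\Rightarrow$ non-uniform'' is immediate: given $x\in\R^n$, $d\in\Dc$ and $\eps>0$, I apply the defining property \eqref{eq:Uniform_weak2} of uniform global weak attractivity with $r:=\|x\|_{\A}$ to obtain a time $t\leq\tau(\eps,r)$ with $\|\phi(t,x,d)\|_{\A}\leq\eps$, which is exactly \eqref{eq:LIM_inf_form}. For the converse I would argue by contradiction, reducing the limiting step to \cite[Corollary III.3]{SoW96}.

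So assume $\A$ is globally weakly attractive but not uniformly so. Negating \eqref{eq:Uniform_weak2} yields $\eps_0,r_0>0$ together with sequences $t_n\to\infty$, points $x_n$ with $\|x_n\|_{\A}\leq r_0$, and disturbances $d_n\in\Dc$ such that $\|\phi(t,x_n,d_n)\|_{\A}>\eps_0$ for all $t\in[0,t_n]$. Since $\A$ is bounded, the set $\{x:\|x\|_{\A}\leq r_0\}$ is a bounded subset of $\R^n$, hence relatively compact, so along a subsequence $x_n\to x^*$ with $\|x^*\|_{\A}\leq r_0$. As $\Sigma$ is an ODE system, trajectories issuing from a bounded set of initial conditions stay bounded on each finite horizon (a standard property of finite-dimensional forward complete systems, so that $\Rc^T(\{x:\|x\|_{\A}\leq r_0\})$ is bounded), and the resulting uniform bound on $f$ over the compact state-region and the compact set $D$ makes the $\phi(\cdot,x_n,d_n)$ equibounded and equicontinuous on every $[0,T]$. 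The compactness of the solution set furnished by finite dimension together with compactness of $D$ --- which is precisely the content of \cite[Corollary III.3]{SoW96} --- then lets me extract a genuine limiting trajectory $z=\phi(\cdot,x^*,d^*)$ of $\Sigma$, obtained as a locally uniform limit of the $\phi(\cdot,x_n,d_n)$. Passing to the limit in $\|\phi(t,x_n,d_n)\|_{\A}>\eps_0$ (valid on $[0,t_n]$ with $t_n\to\infty$) and using continuity of $y\mapsto\|y\|_{\A}$ gives $\|z(t)\|_{\A}\geq\eps_0$ for every $t\geq0$. Thus the genuine trajectory $\phi(\cdot,x^*,d^*)$ never enters $B_{\eps_0/2}(\A)$, contradicting \eqref{eq:LIM_inf_form} applied to $x^*,d^*$ with $\eps:=\eps_0/2$.

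The main obstacle is precisely the extraction of the limiting trajectory $z$ and the guarantee that it is a \emph{genuine} trajectory of $\Sigma$ rather than merely a solution of the convexified inclusion $\dot z\in\overline{\mathrm{co}}\,f(z,D)$; this closedness of the solution set in $C([0,T],\R^n)$ is exactly where relative compactness of bounded balls of the state space and compactness of $D$ are indispensable, and it is the technical point encapsulated by \cite[Corollary III.3]{SoW96}. It is also the step that collapses for infinite-dimensional systems, where closed bounded balls are no longer compact; this explains why, in sharp contrast to Remark~\ref{rem:Approachability_weaker_than_Uniform_Approachability_v2}, uniform and non-uniform weak attractivity coincide in the ODE setting but not in general.
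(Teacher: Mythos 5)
Your easy direction is fine, but the hard direction has a genuine gap at its decisive step. The locally uniform limit $z$ of the trajectories $\phi(\cdot,x_n,d_n)$ that Arzel\`a--Ascoli produces is in general \emph{not} a genuine trajectory $\phi(\cdot,x^*,d^*)$ for any $d^*\in\Dc$: only compactness of $D$ is assumed, so the velocity sets $f(x,D)$ need not be convex, and the solution set of \eqref{eq:ODE_Sys} is therefore not closed under locally uniform limits --- its closure is the solution set of the relaxed inclusion $\dot z\in\overline{\mathrm{co}}\,f(z,D)$. A one-dimensional example: $\dot x=d$, $D=\{-1,1\}$; rapidly oscillating disturbances yield trajectories converging uniformly to a constant function, which is not a trajectory of the system for any admissible $d$. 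Consequently your final contradiction evaporates: $\|z(t)\|_{\A}\geq\eps_0$ for all $t$ contradicts weak attractivity of the \emph{relaxed} system, not of $\Sigma$, so \eqref{eq:LIM_inf_form} cannot be applied to $z$. Your attribution of the missing closedness to \cite[Corollary III.3]{SoW96} is also a mischaracterization: that corollary is not a compactness-of-solutions statement; it \emph{is} the uniform-reachability-time result you are trying to prove (hypothesis: every trajectory starting in a compact set $C$ reaches a compact set $K$; conclusion: a uniform bound on the time to reach an open set $\Omega\supset K$). Its proof confronts exactly the non-convexity obstruction above via the Filippov--Wa\.zewski relaxation theorem, which is precisely why its hypothesis concerns a compact $K$ while its conclusion only guarantees reaching a strictly larger open set.

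The paper's own proof is simply a direct application of that corollary: take $C:=\overline{B_r(\A)}$, $K:=\overline{B_{\eps/2}(\A)}$, $\Omega:=B_{\eps}(\A)$; global weak attractivity (applied with $\eps/2$) gives the hypothesis, and the conclusion is exactly \eqref{eq:Uniform_weak2}. If you want a self-contained argument along your lines, you must either (a) keep the compactness step but accept that $z$ is only a relaxed trajectory, and then approximate $z$ by genuine trajectories on the whole half-line (chaining the relaxation theorem with Gronwall-type estimates) to contradict \eqref{eq:LIM_inf_form} --- which is the actual technical content hidden behind \cite[Corollary III.3]{SoW96}; or (b) cite the corollary for what it really says, in which case the contradiction scaffolding is unnecessary and your proof collapses to the paper's one-liner.
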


\begin{proof}
Pick any $r,\eps>0$.
An application of \cite[Corollary III.3]{SoW96} with $C:=\overline{B_r(\A)}$, $\Omega:=B_{\eps}(\A)$,
$K:=\overline{B_{\frac{\eps}{2}}(\A)}$ ensures existence of a $\tau(r,\eps)$ such that for all $x\in C$ and
any $d\in \Dc$ there is a $t\leq\tau$ with $|\phi(t,x,d)| <\eps$.
\end{proof}
%

\begin{remark}
\label{rem:ISS_version_of_LIM_equals_ULIM}
A counterpart of Proposition~\ref{prop:Approachability_equals_Uniform_Approachability} 
can be also proved in the context of input-to-state stability theory, see \cite[Proposition 9]{MiW17b}.
\end{remark}

\begin{remark}
\label{rem:Uniform_Boundedness_of_disturbances_is_essential}
It is easy to see that Proposition~\ref{prop:Approachability_equals_Uniform_Approachability} is not valid for 
systems with disturbances which are not uniformly bounded. 
Indeed, the system $\dot{x} = - \frac{1}{|d| + 1}x$ with $\Dc= L_\infty(\R_+,\R)$ has $0$ as
a global weak attractor but not as a uniform one.
\end{remark}

The criterion for practical UGAS of a system \eqref{eq:ODE_Sys} takes a particularly simple form.
We have the following corollary of Proposition~\ref{prop:Approachability_equals_Uniform_Approachability}:
\begin{corollary}
\label{cor:ODE_pUGAS}
\eqref{eq:ODE_Sys} is pUGAS if and only if there is a bounded globally weakly attractive set $\A\subset \R^n$ for \eqref{eq:ODE_Sys}.
\end{corollary}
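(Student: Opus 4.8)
The plan is to read Corollary~\ref{cor:ODE_pUGAS} as the specialization of the general criterion Theorem~\ref{thm:RFC_weak_attr_imply_pUGAS} to the ODE class \eqref{eq:ODE_Sys}, using that in this setting the distinction between weak and uniform weak attractivity collapses (Proposition~\ref{prop:Approachability_equals_Uniform_Approachability}) and that RFC is automatic. I would prove the two implications separately. For necessity, suppose \eqref{eq:ODE_Sys} is pUGAS; by Remark~\ref{rem:Invariance_of_pUGAS_Lagrange_Stability_wrt_choice_of_aboundedset} I may take $\{0\}$ to be pUGAS, so there are $\beta\in\KL$ and $c>0$ with $\|\phi(t,x,d)\|\leq\beta(\|x\|,t)+c$, cf.\ \eqref{eq:practical_UGAS_wrt_A_estimate}. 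I then claim the bounded set $S:=\overline{B_c}$ is globally weakly attractive: given $x\in\R^n$, $d\in\Dc$ and $\eps>0$, since $\beta(\|x\|,\cdot)\in\LL$ there is a $t\geq0$ with $\beta(\|x\|,t)<\eps$, whence $\|\phi(t,x,d)\|<c+\eps$ and therefore $\|\phi(t,x,d)\|_{S}\leq\max\{\|\phi(t,x,d)\|-c,\,0\}<\eps$, which is exactly \eqref{eq:LIM_inf_form} for $S$.

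For sufficiency, let $\A\subset\R^n$ be a bounded globally weakly attractive set. First, Proposition~\ref{prop:Approachability_equals_Uniform_Approachability} upgrades $\A$ to a \emph{uniformly} globally weakly attractive set. To conclude pUGAS via the implication \ref{enum:pUGAS_item3}$\Rightarrow$\ref{enum:pUGAS_item1} of Theorem~\ref{thm:RFC_weak_attr_imply_pUGAS}, it then suffices to certify that \eqref{eq:ODE_Sys} is RFC; feeding RFC together with the uniform global weak attractor into that theorem yields the estimate \eqref{eq:practical_UGAS_wrt_A_estimate}, and hence pUGAS.

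I expect the RFC step to be the only real obstacle. Forward completeness is already built into \eqref{eq:ODE_Sys} being a system (axiom $(\Sigma1)$), but RFC (Definition~\ref{Def_RFC}) is the strictly stronger statement that finite-time reachable sets of bounded sets stay bounded. The key fact I would invoke is that in \emph{finite} dimensions with $D$ compact this upgrade is free: a forward complete system whose admissible disturbances are uniformly bounded --- which holds here because compactness of $D$ forces $\|d(\cdot)\|_\infty\leq\max_{v\in D}|v|$ for every $d\in\Dc$ --- admits a continuous increasing $\mu$ with $\|\phi(t,x,d)\|\leq\mu(\|x\|,t)$, i.e.\ it satisfies criterion (ii) of Lemma~\ref{lem:RFC_criterion} and is therefore RFC. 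This is the Angeli--Sontag forward-completeness estimate, and I would cite it as the external ingredient. I would flag that one cannot obtain RFC by a naive subsequence argument: although $\overline{B_C}$ and $[0,\tau]$ are compact, the disturbance space $L_\infty(\R_+,D)$ is not, so the uniformity over $d\in\Dc$ must come from the ODE structure (the Lyapunov-type construction behind that estimate) rather than from compactness of the data.
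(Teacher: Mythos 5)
Your proposal is correct and takes essentially the same route as the paper: the published proof likewise combines Theorem~\ref{thm:RFC_weak_attr_imply_pUGAS}, Proposition~\ref{prop:Approachability_equals_Uniform_Approachability}, and the fact that forward completeness implies RFC for ODEs with compact disturbance value sets, which the paper cites as \cite[Proposition 5.1]{LSW96} while you invoke the equivalent Angeli--Sontag estimate. Your direct verification that $\overline{B_c}$ is globally weakly attractive is just a self-contained rendering of the necessity direction that the paper obtains from the implication \ref{enum:pUGAS_item1}$\Rightarrow$\ref{enum:pUGAS_item3} of the same theorem.
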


\begin{proof}
Follows from Theorem~\ref{thm:RFC_weak_attr_imply_pUGAS}, Proposition~\ref{prop:Approachability_equals_Uniform_Approachability} and 
\cite[Proposition 5.1]{LSW96} (showing that forward completeness and robust forward completeness are equivalent notions for ODEs with uniformly bounded disturbances).
\end{proof}


\vspace{-7mm}

\section{Conclusions}

For robustly forward complete systems we have shown that practical uniform global asymptotic stability is equivalent to the existence of a global uniform weak attractor.
For the same class of systems, we proved that existence of a non-coercive Lyapunov function implies practical UGAS.
Relations between weak attractivity and uniform weak attractivity have been discussed for the ODEs and linear infinite-dimensional systems without disturbances.

\vspace{-4mm}

\section*{Acknowledgements}

This research has been supported by the German Research Foundation (DFG) within the project "Input-to-state stability and stabilization of distributed parameter systems" (grant Wi 1458/13-1).
The author thanks Fabian R. Wirth for fruitful conversations and his feedback concerning this paper.
The author is also thankful to the anonymous reviewers for the careful
evaluation of the paper and their valuable suggestions.


\bibliographystyle{abbrv}

\bibliography{C:/Users/Andrii/GoogleDrive/TEX_Data/Mir_LitList}

\end{document}

{\color{red}

\section{Lipschitz continuity and robust invariance}

\begin{definition}
\label{axiom:Lipschitz}
We say that the flow of $\Sigma=(X,\Dc,\phi)$ is Lipschitz continuous on compact intervals, if 
for any $\tau>0$ and any $r>0$ there exists $L>0$ so that 
\begin{equation}
x,y\in \overline{B_r},\ t \in [0,\tau],\ d\in\Dc \srs \|\phi(t,x,d) - \phi(t,y,d) \| \leq L \|x-y\|.
\label{eq:Flow_is_Lipschitz}
\end{equation}	
\end{definition}

{\color{red} Is this Lemma needed?}
\begin{lemma}
\label{lem:RobustInvariantSet}
Let $\Sigma=(X,\Dc,\phi)$ be a  system with a flow which is Lipschitz continuous on compact intervals.
Then any bounded invariant set is robustly invariant.
\end{lemma}

\begin{proof}
Let $\A$ be an invariant set. 
Pick any $\eps>0$, $h>0$ and set $r := 2 \|\A\|$. For this $r$ there is a $L=L(r,h)$: for any $x \in B_r$, $y \in \A \subset B_r$ and $t\in[0,h]$ it holds that
\begin{eqnarray*}
\|\phi(t,x,d) -\phi(t,y,d)\| \leq L \|x-y\|.
\end{eqnarray*}

Pick $\delta:= \min\big\{\frac{\eps}{L},r\big\}$ and pick any $x\in X$: $\|x\|_\A < \delta$. 
Then there is a $y \in \A$: $\|x-y\|\leq \delta$ and the following estimates hold for $t\in[0,h]$ and $d\in\Dc$ (note that $\phi(t,y,d) \in\A$ due to invariance of $\A$):
\begin{eqnarray*}
\|\phi(t,x,d)\|_{\A} &=& \inf_{z\in \A}\|\phi(t,x,d) -z\| \\
&\leq& \|\phi(t,x,d) - \phi(t,y,d)\|\\
&\leq& L \|x-y\|\\
&\leq& \eps.
\end{eqnarray*}
This shows robust invariance of $\A$.
\end{proof}

\begin{remark}
\label{rem:Boundedness_in_RobustInvarianceLemma}
 Without assumption of boundedness of $\A$ Lemma~\ref{lem:RobustInvariantSet} does not hold. Indeed, consider the system $\dot{x}=0$, $\dot{y}=xy$ which has a flow which is Lipschitz continuous on compact intervals. 

{\color{red}
To see this, pick any $t>0$ and any $(x_1,y_1),(x_2,y_2)\in\R^2$. Then 
\begin{eqnarray}
|\phi(t,(x_1,y_1)^T)-\phi(t,(x_2,y_2)^T)| &=& |(x_1,e^{x_1 t}y_1)^T-(x_2,e^{x_2 t}y_2)^T| \\
&=& \sqrt{|x_1-x_2|^2 + |e^{x_1 t}y_1 - e^{x_2 t}y_2|^2} \\
&=& \sqrt{|x_1-x_2|^2 + \big(e^{x_1 t}|y_1-y_2| + |e^{x_1 t} - e^{x_2 t}||y_2|\big)^2} \\
&=& \sqrt{|x_1-x_2|^2 + 2e^{2 x_1 t}|y_1-y_2|^2 + 2|e^{x_1 t} - e^{x_2 t}|^2|y_2|^2}.
\label{eq:}
\end{eqnarray}
}

The set $\A:=\R\times \{0\}$ is invariant for this system. 
Let us show, that it is not robustly invariant.
Pick any $\eps>0$ and $h>0$ and pick any $\delta>0$. 
Consider a point $z_n:=(n,\delta)$. It is clear, that $\|z_n\|_{\A}=\delta$ for any $n\in\N$.
For an initial condition $(x(0),y(0))=z_n$ the corresponding solution of $\Sigma$ at time $h$ is 
$\phi(h,z_n)=(n,\delta e^{nh})$. For any $n\in\N$ which is larger than $\frac{1}{h}\ln(\frac{2\eps}{\delta})$ it holds that 
$\|\phi(h,z_n)\|_{\A} \geq 2\eps$.
\end{remark}

}

\section{Concerning closed invariant sets}

{\color{red}

\begin{lemma}
\label{lem:closure_of_invariant_set}
Let the flow $\phi$ of $\Sigma$ depend continuously on initial states and let $\A \subset X$ be an invariant set. Then $\overline{\A}$ is again invariant.
\end{lemma}

\begin{proof}
Let $x\in \overline{\A}$. Then there exists $\{x_n\} \subset \A$: $x_n\to x$ as $n\to\infty$. Pick any $t\geq 0$, $d\in \Dc$. Then $\phi(t,x_n,d)\to\phi(t,x,d)$ as $n\to\infty$, since $\Sigma$ depends continuously on initial states.
At the same time $\phi(t,x_n,d) \in \A$ due to invariance of $\A$, and hence $\phi(t,x,d) \in\overline{\A}$ which shows that $\overline{\A}$ is invariant.
\end{proof}

Another lemma will be useful in the sequel:
\begin{lemma}
\label{lem:properties_A_epsilon_old}
Let the flow $\phi$ of $\Sigma$ depend continuously on initial states and let $\A \subset X$.
Then the following holds:
\begin{itemize}
	\item[(i)] $\overline{\A_\eps}$ is a closed invariant set.
	\item[(ii)] $D_+(\A)$ is a nonempty closed invariant set.
\end{itemize}
\end{lemma}

\begin{proof}
(i). Clearly, $\A_\eps$ is an invariant set, and thus (i) follows from Lemma~\ref{lem:closure_of_invariant_set}.

(ii). Since $\A \subset \overline{\A_\eps}$ for any $\eps>0$, we have $\A \subset D_+(\A)$, and hence $D_+(\A)$ is nonempty. $D_+(\A)$ is closed as an intersection of closed sets, and $D_+(\A)$ is invariant by Lemma~\ref{lem:intersection_of_invariant_sets}. 
\end{proof}

\begin{lemma}
\label{lem:properties_A_epsilon_0_old}
Let $\A \subset X$. Then the following holds:
\begin{itemize}
	\item[(i)] $\A_a \subset \A_b$ for $a<b$.
	\item[(ii)] $D_+(\A)=\bigcap_{n\in\N} \overline{\A_{\frac{1}{n}}}$. 
	\item[(iii)] $\A_\eps$ is an invariant set.
	\item[(iv)] $P_+(\A)$ is a nonempty invariant set.
\end{itemize}
\end{lemma}

A version of a Proposition~\ref{prop:from_ULIM_to_UGATT} for the case if the closed sets are of interest.
\begin{proposition}
\label{prop:from_ULIM_to_UGATT_closed_sets}
Let $\Sigma$ be a robustly forward complete system and assume that $\A$ is a bounded global uniform weak attractor 
for $\Sigma$. Then for any $\eps>0$ the set $\A_\eps$ is a bounded invariant UGATT set.

If additionally it holds that
\begin{itemize}
	\item[(i)] for each $\eps>0$ there is a $\delta >0$ so that $\sup_{y\in \A_\delta}\|y\|_{D_+(\A)}<\eps$,
	\item[(ii)] the flow of $\Sigma$ depends continuously on initial conditions,
\end{itemize}
then $D_+(\A)$ is a bounded closed invariant UGATT set.
\end{proposition}

\begin{proof}
The proof of Proposition~\ref{prop:from_ULIM_to_UGATT_closed_sets} goes along the lines of the proof of 
Proposition~\ref{prop:from_ULIM_to_UGATT} with substitution of $\overline{\A_\eps}$ instead of $\A_\eps$, and using additionally Lemma~\ref{lem:closure_of_invariant_set}.

%
%
\end{proof}

 It would be nice to show that $D_+(\A)$ is the minimal set w.r.t. which $\Sigma$ is UGATT.
}

\section{Locally compact state space}

The results in \cite{BLS66} have been proved for the dynamical systems over locally compact metric spaces.

\begin{definition}
\label{def:locally_compact_space}
A topological space $X$ is called locally compact if for every $x\in X$ there is a neighborhood of $x$ which is compact.
\end{definition}

The following Lemma will be useful, for the proof see \cite[Theorem 2.25]{RyY08}
\begin{lemma}[Riesz's Lemma]
\label{lem:Riesz_Lemma}
Let $X$ be a normed linear space, $Y$ be a closed linear subspace of $X$, $Y\neq X$ and $\alpha \in(0,1)$. 
Then there exists an $x \in X$ with $\|x\| = 1$ such that $\|x - y\| > \alpha$ for all $y \in Y$.
\end{lemma}

As a consequence of Riesz's Lemma we have the following result, see \cite[Theorem 2.26]{RyY08}
\begin{proposition}
\label{prop:Criterion_infinitedimensionality}
Let $X$ be a normed linear space. Then $X$ is infinite-dimensional if and only if the closed ball $\{x\in X:\|x\|\leq 1\}$ is compact.
\end{proposition}

As a corollary we obtain the following result:
\begin{proposition}
\label{prop:Criterion_local_compactness}
Let $X$ be a normed linear space. Then $X$ is locally compact iff $X$ is finite-dimensional.
\end{proposition}

\begin{proof}
If $X$ is finite-dimensional, then it is clearly locally compact.

Let $X$ be a locally compact normed linear space. Then there is a neighborhood of $0\in X$ which is compact. We can always choose this neighborhood to be a closed ball with radius $\eps$ for $\eps$ small enough.

Since for any $r>0$ a map $x\to r x $ is a homeomorphism over $X$, it is easy to see that the ball $\{x\in X: \|x\|\leq r\}$ is compact iff the ball with radius $\eps$ is. In particular, the unit ball is compact, and hence $X$ is finite-dimensional according to Proposition~\ref{prop:Criterion_infinitedimensionality}.
\end{proof}

{

The following basic property of locally compact metric spaces will be important in the sequel. For the proof of a more general result see \cite[Proposition 6.9.3 ]{...}
\begin{proposition}
\label{prop:neighborhoods_locally_compact_spaces}
Let $X$ be a locally compact metric space and $K\subset V \subset X$ with $K$ compact and $V$ open. Then there exists a $W$ with
$K \subset W \subset \overline{W} \subset V$ with $W$ open and $\overline{W}$ compact.
\end{proposition}

A consequence of Proposition~\ref{prop:neighborhoods_locally_compact_spaces} is that for any compact set $K$ of a locally compact space $X$ has a compact neighborhood.

\begin{proposition}
Property (i) in the statement of Proposition~\ref{prop:from_ULIM_to_UGATT}
holds if $X$ is a locally compact space and $D_+(\A)$ is a compact set.
\end{proposition}

\begin{proof}
Assume that the property (i) in the statement of Proposition~\ref{prop:from_ULIM_to_UGATT} doesn't hold. 
Then there is an $\eps>0$ so that for each $\delta>0$ there is an $y\in \A_\delta$ so that $\|y\|_{D_+(\A)} \geq \eps$.
Pick a sequence $\{y_k\}_{k\in\N}$ so that $y_k\in\A_{\frac{1}{k}}$ and $\|y_k\|_{D_+(\A)}\geq\eps$ for all $k\in\N$.

Since $D_+(\A)$ is a compact set, and $X$ is locally compact, there is a neighborhood of 

Without loss of generality we can assume that $\{y_k\}_{k\in\N}$ is a convergent sequence. Indeed, $\{y_k\}_{k\in\N}\subset \A_1$ by construction and $\A_1$ is a bounded set. Hence $\{y_k\}_{k\in\N}$ contains a convergent subsequence since $X$ is compact.

Let $y:=\lim_{k\to\infty}y_k$. Since $y_k\in\A_{\frac{1}{m}}$ for all $k\geq m$, $y\in\A_{\frac{1}{m}}$ for all $m\in\N$. Thus, $y\in \bigcap_{m\in\N}\A_{\frac{1}{m}} = D_+(\A)$.
This contradicts to the fact that $\|y_k\|_{D(\A)}\geq \eps$ for all $k\in\N$.
The claim is proved.

{\color{red} Do we need some assumption on boundedness}.
\end{proof}

\begin{proposition}
Property (i) in the statement of Proposition~\ref{prop:from_ULIM_to_UGATT}
holds if $X$ is a compact space.
\end{proposition}

\begin{proof}
Assume that the property (i) in the statement of Proposition~\ref{prop:from_ULIM_to_UGATT} doesn't hold. 
Then there is an $\eps>0$ so that for each $\delta>0$ there is an $y\in \A_\delta$ so that $\|y\|_{D_+(\A)} \geq \eps$.
Pick a sequence $\{y_k\}_{k\in\N}$ so that $y_k\in\A_{\frac{1}{k}}$ and $\|y_k\|_{D_+(\A)}\geq\eps$ for all $k\in\N$.

Without loss of generality we can assume that $\{y_k\}_{k\in\N}$ is a convergent sequence. Indeed, $\{y_k\}_{k\in\N}\subset \A_1$ by construction and $\A_1$ is a bounded set. Hence $\{y_k\}_{k\in\N}$ contains a convergent subsequence since $X$ is compact.

Let $y:=\lim_{k\to\infty}y_k$. Since $y_k\in\A_{\frac{1}{m}}$ for all $k\geq m$, $y\in\A_{\frac{1}{m}}$ for all $m\in\N$. Thus, $y\in \bigcap_{m\in\N}\A_{\frac{1}{m}} = D_+(\A)$.
This contradicts to the fact that $\|y_k\|_{D(\A)}\geq \eps$ for all $k\in\N$.
The claim is proved.

{\color{red} Do we need some assumption on boundedness}.
\end{proof}

{\color{red}

TO DO

\begin{proposition}
\label{prop:ULS_implies_DplusA_equals_A}
If in addition $X$ is a locally compact space and $\A$ a compact set, then condition $D_+(\A)=\A$ implies stability of $\A$.
\end{proposition}

\begin{proof}
Now assume that $X$ is a locally compact space and $\A \subset X$ is a compact set.
Let $D_+(\A)=\A$ hold and assume that $\Sigma$ is not stable. Then there is an $\eps>0$ and 
sequences $\{x_n\} \subset X$: $\|x_n\|_{\A} \to 0$ as $n\to\infty$ and sequences $\{t_n\}_{n\in\N} \subset \R_+$,
$\{d_n\}_{n\in\N} \subset \Dc$ so that $\|\phi(t_n,x_n,d_n)\|_\A \geq \eps$ for all $n\in\N$.

Since $X$ is locally compact, there is a certain compact neighborhood $V$ of $\A$, and hence there is a certain $N>0$: $x_n \in V$ for all $n\in \N$
Since $V$ is compact, there is a convergent subsequence of $\{x_n\}$ which converges to certain $x_* \in\A$.
  
\end{proof}

}

\begin{remark}
 \label{rem:D_plus_A_non_locally_compact_set}
If $X$ is not a locally compact space, then validity of a condition $D_+(\A)=\A$ does not imply stability of $\A$,
see \cite[Exercise~1.18.4, p. 62]{BhS02}.
\end{remark}

}

\end{document}




\begin{theorem}[Optional addition to theorem head]
\end{theorem}

\begin{proof}[Optional replacement proof heading]
\end{proof}

\begin{figure}
\includegraphics{filename}
\caption{text of caption}
\label{}
\end{figure}
